\documentclass[11pt,a4paper,twoside,leqno]{amsart}

\usepackage{xifthen}
\usepackage{verbatim}

\newcounter{x}
\newcounter{y}
\newcounter{z}

\newcommand\xaxis{210}
\newcommand\yaxis{-30}
\newcommand\zaxis{90}

\newcommand\topside[3]{
  \fill[fill=cubecolor, draw=black,shift={(\xaxis:#1)},shift={(\yaxis:#2)},
  shift={(\zaxis:#3)}] (0,0) -- (30:1) -- (0,1) --(150:1)--(0,0);
}

\newcommand\leftside[3]{
  \fill[fill=cubecolor, draw=black,shift={(\xaxis:#1)},shift={(\yaxis:#2)},
  shift={(\zaxis:#3)}] (0,0) -- (0,-1) -- (210:1) --(150:1)--(0,0);
}

\newcommand\rightside[3]{
  \fill[fill=cubecolor, draw=black,shift={(\xaxis:#1)},shift={(\yaxis:#2)},
  shift={(\zaxis:#3)}] (0,0) -- (30:1) -- (-30:1) --(0,-1)--(0,0);
}

\newcommand\cube[3]{
  \topside{#1}{#2}{#3} \leftside{#1}{#2}{#3} \rightside{#1}{#2}{#3}
}

\newcommand*\cubecolors[1]{%
  \ifcase#1\relax
  \or\colorlet{cubecolor}{green}%
  \or\colorlet{cubecolor}{green}%
  \or\colorlet{cubecolor}{green}%
  \or\colorlet{cubecolor}{yellow}%
  \or\colorlet{cubecolor}{yellow}%
  \or\colorlet{cubecolor}{yellow}%
  \else
    \colorlet{cubecolor}{yellow}%
  \fi
}

\newcommand\planepartition[1]{
 \setcounter{x}{-1}
  \foreach \a in {#1} {
    \addtocounter{x}{1}
    \setcounter{y}{-1}
    \foreach \b in \a {
      \addtocounter{y}{1}
      \setcounter{z}{-1}
      \foreach \c in {1,...,\b} {
        \addtocounter{z}{1}
        \cubecolors{\c}
        \cube{\value{x}}{\value{y}}{\value{z}}
      }
    }
  }
}

\usepackage[english]{babel}
\usepackage{mathpazo}
\numberwithin{equation}{section}
\setcounter{tocdepth}{2}
\usepackage[a4paper,top=3.5cm,bottom=3cm,left=3.5cm,right=3.5cm,bindingoffset=5mm]{geometry}
\usepackage{palatino}  
\usepackage{indentfirst}
\usepackage[colorlinks,bookmarks]{hyperref} 
\usepackage{amsmath,amsthm,amssymb,amsfonts,mathrsfs}
\usepackage{braket}
\usepackage{caption}
\usepackage{tikz}
\usepackage{tikz-cd}
\usetikzlibrary{matrix,shapes,arrows,decorations.pathmorphing}
\tikzset{commutative diagrams/.cd,
mysymbol/.style={start anchor=center,end anchor=center,draw=none}
}
\newcommand\MySymb[2][\square]{%
  \arrow[mysymbol]{#2}[description]{#1}}

\renewenvironment{proof}{{\scshape Proof.}}{\qed}

\definecolor{auburn}{rgb}{0.43, 0.21, 0.1}

\theoremstyle{definition}
\newtheorem{defin}{Definition}[section]
\newtheorem{rem}{Remark}[section]
\newtheorem{conj}{Conjecture}

\newtheoremstyle{conv} 
        {4mm}
        {4mm}
        {\rmfamily}
        {4mm}
        {\itshape}
        {.}
        {1mm}
        {}
\theoremstyle{conv}

\newtheoremstyle{thm} 
        {4mm}
        {4mm}
        {\slshape}
        {4mm}
        {\scshape}
        {.}
        {1mm}
        {}
        
\theoremstyle{thm}
\newtheorem{teo}{Theorem}[section]
\newtheorem*{teo*}{Theorem}
\newtheorem{prop}{Proposition}[section]
\newtheorem{lemma}{Lemma}[section]
\newtheorem{cor}{Corollary}[section]

\newtheoremstyle{rem} 
        {4mm}
        {4mm}
        {\rmfamily}
        {4mm}
        {\scshape}
        {.}
        {2mm}
        {}
\theoremstyle{rem}

\newtheorem{question}{Question}[section]
\newtheorem*{notation*}{Notation}

\newtheorem*{conventions}{Conventions}

\def\be{\begin{equation}}    
\def\ee{\end{equation}}
\def\bitem{\begin{itemize}}
\def\eitem{\end{itemize}}
\def\benum{\begin{enumerate}}
\def\eenum{\end{enumerate}}
\def\vir{\textrm{vir}}
\def\pur{\textrm{pur}}
\def\isom{\cong}  
\def\ra{\rightarrow}
\def\surj{\twoheadrightarrow}        

\DeclareMathOperator{\Quot}{Quot}

\DeclareMathOperator{\Supp}{Supp}
\DeclareMathOperator{\coker}{coker}

\DeclareMathOperator{\DT}{DT}
\DeclareMathOperator{\PT}{PT}
\DeclareMathOperator{\Hilb}{Hilb}
\DeclareMathOperator{\Sym}{Sym}
\DeclareMathOperator{\Ext}{Ext}

\title{Local contributions to Donaldson-Thomas invariants}
\author{Andrea T.~Ricolfi}
\address{Kjell Arholms 41, 4021 Stavanger (Norway)}
\email{andrea.ricolfi@uis.no}

\subjclass[2010]{14N35, 14C05.}
\keywords{Donaldson-Thomas theory, Hilbert schemes, Curve counting}

\begin{document}

\begin{abstract}
Let $C$ be a smooth curve embedded in a smooth quasi-projective threefold $Y$, 
and let $Q^n_C=\Quot_n(\mathscr I_C)$ be the Quot scheme of length $n$ quotients of its ideal sheaf.
We show the identity $\tilde\chi(Q^n_C)=(-1)^n\chi(Q^n_C)$, where $\tilde\chi$ is the Behrend weighted Euler characteristic.
When $Y$ is a projective Calabi-Yau threefold, this shows that the DT contribution of a smooth rigid curve
is the signed Euler characteristic of the moduli space. This can be rephrased as a DT/PT wall-crossing type formula, 
which can be formulated for arbitrary smooth curves. 
In general, such wall-crossing formula is shown to be equivalent to a certain Behrend function identity.
\end{abstract}

\maketitle

    \begingroup
    \hypersetup{linkcolor=black}
    \tableofcontents
    \endgroup
    
\hypersetup{
            citecolor=blue,%
            linkcolor=auburn}

\section{Introduction} 
One of the conjectures in~\cite{MNOPpaper} stated that $0$-dimensional Donaldson-Thomas 
(DT, for short) invariants of a smooth projective Calabi-Yau threefold equal the signed Euler characteristic of the moduli space. 
Now, the more general formula
\be\label{zerodimDT}
\tilde \chi(\Hilb^nY)=(-1)^n\chi(\Hilb^nY) 
\ee
is known to hold for \emph{any} smooth threefold $Y$, proper or not~\cite[Thm.~4.11]{BFHilb}. Here $\tilde\chi=\chi(-,\nu)$ is 
the Euler characteristic weighted by the Behrend function~\cite{Beh}. The $0$-dimensional MNOP conjecture is
also solved with cobordism techniques in~\cite{JLI, LEPA}. 

\subsection{Main result}
We propose a statement analogous to \eqref{zerodimDT}, again with no Calabi-Yau or 
properness assumption on the threefold $Y$, but where a curve is present. More precisely, we focus on the space of $1$-dimensional subschemes
$Z\subset Y$ whose fundamental class is the cycle of a fixed Cohen-Macaulay curve $C\subset Y$. A natural scheme structure
on this space seems to be provided by the Quot scheme 
\[
Q^n_C=\Quot_n(\mathscr I_C)
\] 
of $0$-dimensional length $n$ quotients of $\mathscr I_C$, the ideal sheaf of $C$. 
By identifying a surjection $\mathscr I_C\surj F$ with its kernel $\mathscr I_Z$, we see that $Q^n_C$ 
parametrizes curves $Z\subset Y$ differing from $C$ by a finite subscheme of length $n$.
Our main result, proved in \S~\ref{DTinvar}, is the following weighted Euler characteristic computation. 

\begin{teo*}
Let $Y$ be a smooth quasi-projective threefold, $C\subset Y$ a smooth curve. If $Q^n_C=\Quot_n(\mathscr I_C)$, then
\be\label{formula1}
\tilde\chi(Q^n_C)=(-1)^n\chi(Q^n_C).
\ee
\end{teo*}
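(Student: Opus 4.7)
The plan is to reduce the global computation to a local one supported on $C$ via a stratification of $Q^n_C$ by the support type of the quotient, and then to finish by torus localization, mirroring the strategy used in~\cite{BFHilb} for \eqref{zerodimDT}. First I would stratify $Q^n_C$ by the support pattern of $F=\mathscr I_C/\mathscr I_Z$: for distinct closed points $p_1,\dots,p_r\in Y$ and multiplicities $n_1+\dots+n_r=n$, the locally closed stratum where $F$ has length $n_i$ at $p_i$ fibres \'etale-locally over an open subset of $\Sym^r Y$ with fibre $\prod_i\mathcal Q_i$, where $\mathcal Q_i$ is the punctual local Quot scheme at $p_i$. When $p_i\notin C$, the sheaf $\mathscr I_C$ is locally trivial at $p_i$ and $\mathcal Q_i$ becomes \'etale-locally the punctual Hilbert scheme $\Hilb^{n_i}(\mathbb A^3)_0$; when $p_i\in C$, choosing \'etale coordinates in which $(Y,C)$ looks like $(\mathbb A^3,L)$ with $L=\{y=z=0\}$, we obtain $\mathcal Q_i\cong\Quot_{n_i}(\mathscr I_L)_0$ \'etale-locally.

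Using the additivity of $\chi$ and $\tilde\chi$ on locally closed stratifications and their multiplicativity on \'etale products (the Behrend function satisfies $\nu_{X\times Y}(p,q)=\nu_X(p)\nu_Y(q)$), and absorbing the off-curve contributions via the identity $\tilde\chi(\Hilb^k(\mathbb A^3)_0)=(-1)^k\chi(\Hilb^k(\mathbb A^3)_0)$ extracted from~\cite{BFHilb}, the theorem would reduce to the local identity
\be
\tilde\chi\bigl(\Quot_k(\mathscr I_L)_0\bigr)=(-1)^k\chi\bigl(\Quot_k(\mathscr I_L)_0\bigr)
\ee
for every $k\ge 0$. To prove this, I would exploit the torus $T=(\mathbb C^*)^3$ scaling the coordinates of $\mathbb A^3$, which preserves $L$ and hence acts on $\Quot_k(\mathscr I_L)_0$. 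Since $\nu$ is constant along $T$-orbits, both $\chi$ and $\tilde\chi$ localize to the finite set of $T$-fixed points, namely the monomial ideals $J\subset(y,z)$ with $(y,z)/J$ of length $k$ supported at the origin. It would then suffice to show that $\nu(J)=(-1)^k$ at each such monomial point.

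This last sign computation is where I expect the main obstacle to lie. The most structured route would be to produce a presentation of $\Quot_k(\mathscr I_L)_0$ as the critical locus of a regular function on a smooth variety, obtained from a framed quiver description of length-$k$ modules arising as quotients of $(y,z)$, in analogy with the Szendr\H{o}i-type description of $\Hilb^k(\mathbb A^3)_0$. Given such a $d$-critical structure, $\nu(J)$ is controlled by the Milnor fibre of the potential at $J$, and a parity argument on the $T$-weight decomposition should yield the desired sign. A possibly more elementary alternative is to build a $T$-equivariant one-parameter family connecting each monomial point $J\subset(y,z)$ to a monomial ideal of the same colength in $\Hilb^k(\mathbb A^3)_0$, along which $\nu$ stays constant by torus invariance, thereby transporting the sign directly from the Hilbert-scheme case of~\cite{BFHilb}.
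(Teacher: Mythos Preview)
Your stratification-and-\'etale-reduction outline is essentially the paper's: it too separates the off-curve contribution (handled via \cite{BFHilb}) from the on-curve contribution, reduces the latter \'etale-locally to the model pair $(\mathbb A^3,L)$, and lands on precisely the local identity you isolate, namely $\chi(F_k,\nu_k)=(-1)^k\chi(F_k)$, where $F_k=\Quot_k(\mathscr I_L)_0$ and $\nu_k$ is the Behrend function of the ambient $M_k=\Quot_k(\mathscr I_L)$ restricted to $F_k$.

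The gap is in your endgame for that local identity. Your deformation route is not well-posed: a monomial point of $\Quot_k(\mathscr I_L)_0$ and one of $\Hilb^k(\mathbb A^3)_0$ live in different schemes, so there is no one-parameter family inside a single ambient space along which $\nu$ can be transported by torus invariance; constancy of $\nu$ on orbits says nothing across moduli spaces. Your critical-locus route is only a hope: you supply no potential whose critical locus is $M_k$, and without one there is no Milnor-fibre parity to invoke. The paper's key move, which replaces both, is to realize $M_k=\Quot_k(\mathscr I_L)$ as an \emph{open} subscheme of the Hilbert scheme $I_{k+1}(X,[C_0])$ of the resolved conifold $X$, a quasi-projective Calabi--Yau threefold (Proposition~\ref{identhgf}). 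This equips $M_k$ with a torus-equivariant \emph{symmetric} obstruction theory for free; then \cite[Cor.~3.5]{BFHilb} yields $\nu_{M_k}(J)=(-1)^{\dim T_J M_k}$ at each isolated fixed point, and the parity $(-1)^k$ of $\dim T_J M_k$ is supplied by \cite[Thm.~2]{MNOPpaper}. That single geometric embedding is the missing idea.
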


The proof uses stratification techniques as in~\cite{BFHilb} and~\cite{BB}.

\subsection{Applications}
Let $Y$ be a smooth projective threefold. Let $I_m(Y,\beta)$ be the Hilbert scheme 
of curves $Z\subset Y$ in class $\beta\in H_2(Y,\mathbb Z)$, with 
$\chi(\mathscr O_Z)=m$. Given a Cohen-Macaulay curve $C\subset Y$
of arithmetic genus $g$, embedded in class $\beta$, we show there is a closed immersion
$\iota:Q^n_C\ra I_{1-g+n}(Y,\beta)$. We define
\be\label{inclus}
I_n(Y,C)\subset I_{1-g+n}(Y,\beta)=I
\ee
to be its scheme-theoretic image. When $Y$ is Calabi-Yau, we define
the \emph{contribution} of $C$ to the full (degree $\beta$) DT invariant of $I$ to be the weighted Euler characteristic
\be\label{resnu}
\DT_{n,C}=\chi(I_n(Y,C),\nu_I).
\ee
A first consequence of \eqref{formula1} is the identity
\[
\DT_{n,C}=(-1)^n\chi(I_n(Y,C))
\]
when $C$ is a smooth \emph{rigid} curve in $Y$, because in this case \eqref{inclus} is both open and closed.

\subsubsection{Local DT/PT correspondence}
Let $P_m(Y,\beta)$ be the moduli space of stable pairs introduced by
Pandharipande and Thomas~\cite{PT}. For a Calabi-Yau threefold $Y$ 
and a homology class $\beta\in H_2(Y,\mathbb Z)$, the generating functions 
encoding the DT and PT invariants of $Y$ satisfy the ``wall-crossing type'' formula
\[
\mathsf{DT}_\beta(Y,q)=M(-q)^{\chi(Y)}\cdot \mathsf{PT}_\beta(Y,q).
\]
Here and throughout, $M(q)$ denotes the MacMahon function, the generating series of plane partitions, that is,
\[
M(q)=\sum_{\pi}q^{|\pi|}=\prod_{k\geq 1}(1-q^k)^{-k}.
\]
The DT/PT correspondence stated above was first conjectured in~\cite{PT}~and later proved in~\cite{Bri,Toda1}.
In this paper we ask about a similar formula relating the \emph{local} invariants, that is, the contributions
of a single smooth curve $C\subset Y$ to the full DT and PT invariants of $Y$ in the class $\beta=[C]$.

\medskip
If $C\subset Y$ is a fixed smooth curve of genus $g$, we consider the closed subscheme
\[
P_n(Y,C)\subset P_{1-g+n}(Y,\beta)=P
\]
of stable pairs with Cohen-Macaulay support equal to $C$. 
We use~\eqref{formula1} and the isomorphism $P_n(Y,C)\isom \Sym^nC$ to show the generating function identity
\be\label{genfunctions}
\sum_{n\geq 0}\tilde\chi(I_n(Y,C))q^n=M(-q)^{\chi(Y)}(1+q)^{2g-2},
\ee
which holds without any Calabi-Yau assumption.

For $Y$ a Calabi-Yau threefold, we consider the stable pair local contributions
\[
\PT_{n,C}=\chi(P_n(Y,C),\nu_P)
\]
like we did in \eqref{resnu} for ideal sheaves. We assemble all the local invariants into generating functions 
\begin{align*}
\mathsf{DT}_C(q)&=\sum_{n\geq 0}\DT_{n,C}q^n\\
\mathsf{PT}_C(q)&=\sum_{n\geq 0}\PT_{n,C}q^n.
\end{align*}
The PT side has been computed~\cite[Lemma~3.4]{BPS} and the result is 
\[
\mathsf{PT}_C(q)=n_{g,C}\cdot (1+q)^{2g-2},
\]
where $n_{g,C}$ is the BPS number of $C$.
Therefore it is clear by looking at \eqref{genfunctions} that the DT/PT correspondence
\be\label{zmxncb}
\mathsf{DT}_C(q)=M(-q)^{\chi(Y)}\cdot \mathsf{PT}_C(q)
\ee
holds for $C$ if and only if, for every $n$, one has
\[
\DT_{n,C}=n_{g,C}\cdot \tilde\chi(I_n(Y,C)).
\]
For instance, it holds when $C$ is rigid. In the last section, we 
discuss the plausibility to conjecture the identity \eqref{zmxncb} 
to hold for all smooth curves.

\begin{conventions}
All schemes are defined over $\mathbb C$, and all threefolds are assumed to be smooth. 
An \emph{ideal sheaf} is a torsion-free sheaf with rank one and trivial
determinant. For a smooth projective threefold $Y$, we denote by $I_m(Y,\beta)$ the moduli space of ideal sheaves with 
Chern character $(1,0,-\beta,-m+\beta\cdot c_1(Y)/2)$. It is naturally isomorphic to the Hilbert scheme parametrizing
closed subschemes $Z\subset Y$ of codimension at least $2$, with homology class $\beta$ and $\chi(\mathscr O_Z)=m$. 
A \emph{Cohen-Macaulay} curve is a scheme of \emph{pure} dimension one without embedded points. 
The \emph{Calabi-Yau} condition for us is simply the existence of a trivialization $\omega_Y\isom \mathscr O_Y$.
We use the word \emph{rigid} as a shorthand for the more correct \emph{infinitesimally rigid}: for a smooth embedded 
curve $C\subset Y$, this means $H^0(C,N_{C/Y})=0$, where $N_{C/Y}$ is the normal bundle. 
Finally, we refer to~\cite{Beh} for the definition and properties of the Behrend function and of the weighted Euler characteristic.
\end{conventions}

\section{The local model}\label{sec:localmodel}

The global geometry of a fixed smooth curve in a threefold $C\subset Y$ 
will be analysed through the local model
\[
\mathbb A^1\subset \mathbb A^3
\]
of a line in affine space. 
We get started by introducing the moduli space of ideal sheaves for this local model.

Let $X$ be the resolved conifold, i.e.~the total space of the rank two bundle
$\mathscr O_{\mathbb P^1}(-1,-1)\ra\mathbb P^1$. It is a quasi-projective Calabi-Yau threefold.
We let $C_0\subset X$ be the zero section, and $\mathbb A^3\subset X$ a \emph{fixed} chart of the bundle.

\begin{defin}\label{def:locmodel}
For any integer $n\geq 0$, we define
\[
M_n\subset I_{n+1}(X,[C_0])
\]
to be the open subscheme parametrizing ideal sheaves $\mathscr I_Z\subset \mathscr O_X$ such 
that no associated point of $Z$ is contained in $X\setminus \mathbb A^3$. 
\end{defin}

Since $C_0$ is rigid, we can interpret $M_n$ as the moduli space of ``curves'' in $\mathbb A^3$,
consisting of a \emph{fixed} affine line $L=C_0\cap\mathbb A^3$ together with $n$ roaming points.

The scheme $M_n$ seems to be the perfect local playground for 
studying the enumerative geometry of a fixed curve (with $n$ points) in a threefold.
Exactly like $\Hilb^n\mathbb A^3$ was essential~\cite{BFHilb} to unveil the Donaldson-Thomas 
theory of $\Hilb^nY$, where $Y$ is any Calabi-Yau threefold, the space $M_n$
will help us to figure out the DT contribution of a fixed smooth rigid curve in a Calabi-Yau threefold (and, conjecturally, all smooth curves).
Forgetting about the Calabi-Yau assumption, we will find out that understanding the local picture in $\mathbb A^3$
gives information about \emph{arbitrary} threefolds, in perfect analogy with the results of~\cite{BFHilb}.

\medskip
In the rest of this section, we show that $M_n$ is isomorphic to the Quot scheme of 
the ideal sheaf of a line, and we compute its DT invariant via equivariant localization.

Let $L$ denote the line $C_0\cap \mathbb A^3$. Note that if $Z\subset X$ corresponds to a point of $M_n$, by definition its embedded points can only be supported on $L$. Similarly, isolated points are confined to the chart $\mathbb A^3\subset X$.

\begin{prop}\label{identhgf}
There is an isomorphism of schemes $M_n\isom \Quot_n(\mathscr I_L)$.
\end{prop}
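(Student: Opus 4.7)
The plan is to construct mutually inverse morphisms $\Phi\colon M_n\to \Quot_n(\mathscr I_L)$ and $\Psi\colon \Quot_n(\mathscr I_L)\to M_n$ using the universal families, and then verify they are inverse at the level of closed points. The pointwise bijection is transparent: a point $[\mathscr I_Z]\in M_n$ has fundamental cycle $[C_0]$, so the smooth curve $C_0$ is contained in $Z$ scheme-theoretically, with cokernel $\mathscr O_Z/\mathscr O_{C_0}$ of length $n$ (by $\chi(\mathscr O_Z)=n+1$ and $\chi(\mathscr O_{C_0})=1$); since by Definition~\ref{def:locmodel} all associated points of $Z$ lie in $\mathbb A^3$, this cokernel is supported in $\mathbb A^3$, and restricting the inclusion $\mathscr I_Z\subset \mathscr I_{C_0}$ to $\mathbb A^3$ turns the datum into a length $n$ quotient $\mathscr I_L\twoheadrightarrow F$, reversibly.

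To construct $\Phi$, I would lift the pointwise inclusion $\mathscr I_Z\subset \mathscr I_{C_0}$ to the universal family: the composition $\mathscr I_{\mathcal Z}\hookrightarrow \mathscr O_{M_n\times X}\twoheadrightarrow p_X^*\mathscr O_{C_0}$ is a map of coherent sheaves vanishing on every closed fiber $\{t\}\times X$, and hence is globally zero by Nakayama applied stalk-wise. The resulting inclusion $\mathscr I_{\mathcal Z}\subset p_X^*\mathscr I_{C_0}$ restricts over $M_n\times \mathbb A^3$ to a surjection $p^*\mathscr I_L\twoheadrightarrow \mathcal F$, and the short exact sequence $0\to \mathcal F\to \mathscr O_{\mathcal Z}|_{M_n\times \mathbb A^3}\to p^*\mathscr O_{C_0}|_{M_n\times \mathbb A^3}\to 0$ forces $\mathcal F$ to be $M_n$-flat of fiberwise length $n$. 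The universal property of the Quot scheme then supplies $\Phi$.

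For the inverse $\Psi$, start from the universal surjection $p^*\mathscr I_L\twoheadrightarrow \mathcal F$ on $\Quot_n(\mathscr I_L)\times \mathbb A^3$. The key geometric input is that $\Supp(\mathcal F)$ is finite over $\Quot_n(\mathscr I_L)$, hence proper, hence already closed inside $\Quot_n(\mathscr I_L)\times X$. This allows $\mathcal F$ and the surjection to be extended uniquely across $\Quot_n(\mathscr I_L)\times X$ to a surjection $p^*\mathscr I_{C_0}\twoheadrightarrow \widetilde{\mathcal F}$ whose kernel $\mathscr I_{\mathcal Z}$ is an ideal sheaf defining a subscheme $\mathcal Z$; the exact sequence $0\to \widetilde{\mathcal F}\to \mathscr O/\mathscr I_{\mathcal Z}\to p^*\mathscr O_{C_0}\to 0$ shows that $\mathscr O_{\mathcal Z}$ is flat over $\Quot_n(\mathscr I_L)$ with fiberwise Euler characteristic $n+1$ and class $[C_0]$. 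The universal property of $I_{n+1}(X,[C_0])$ then yields the morphism, which factors through the open subscheme $M_n$ because all fiberwise associated points of $\mathcal Z$ lie in $\mathbb A^3$ by construction.

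The main obstacle is the flat-extension-by-zero step inside the construction of $\Psi$: one must verify rigorously that $\widetilde{\mathcal F}$ is flat over $\Quot_n(\mathscr I_L)$ and that the kernel of $p^*\mathscr I_{C_0}\twoheadrightarrow \widetilde{\mathcal F}$ behaves as a well-defined ideal sheaf even across the closed locus $\Quot_n(\mathscr I_L)\times (X\setminus \mathbb A^3)$. The properness of $\Supp(\mathcal F)$ over the base is precisely what makes the extension global and unique, and it is this properness that encodes the openness condition defining $M_n$. Once $\Phi$ and $\Psi$ are in place, the identities $\Phi\circ\Psi=\mathrm{id}$ and $\Psi\circ\Phi=\mathrm{id}$ follow from the pointwise bijection together with the naturality of the two constructions in the base.
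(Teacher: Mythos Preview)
Your proposal is correct and follows essentially the same approach as the paper: both directions are built by restricting/extending along the open immersion $\mathbb A^3\hookrightarrow X$, using properness of $\Supp\mathscr F$ over the base to push forward coherently, and checking flatness via the short exact sequence $0\to\mathscr F\to\mathscr O_{\mathcal Z}\to\mathscr O_{C_0\times T}\to 0$. The only cosmetic difference is that the paper phrases everything for an arbitrary test scheme $T$ (so the inverse property is immediate at the level of functors), whereas you work with the universal families; your closing remark about ``pointwise bijection together with naturality'' should really be read as exactly that functorial statement, since agreement on closed points alone would not suffice.
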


\begin{proof}
Let $T$ be a scheme and let $\iota:\mathbb A^3\times T\ra X\times T$ be the natural open immersion. 
If $\mathscr O_{X\times T}\surj\mathscr O_{\mathcal Z}$ represents a $T$-valued point of $M_n$, we can consider the sheaf
$\mathscr F=\mathscr I_{C_0\times T}/\mathscr I_{\mathcal Z}$, which by definition of $M_n$ is supported on a subscheme 
of $\mathbb A^3\times T$ which is finite of relative length $n$ over $T$. Restricting the short exact sequence
\[
0\ra \mathscr F\ra \mathscr O_{\mathcal Z}\ra \mathscr O_{C_0\times T}\ra 0
\]
to $\mathbb A^3\times T$ gives a short exact sequence
\[
0\ra \iota^\ast\mathscr F\ra \iota^\ast\mathscr O_{\mathcal Z}\ra \mathscr O_{L\times T}\ra 0
\]
with $T$-flat kernel, so we get a $T$-valued point $\mathscr I_{L\times T}\surj \iota^\ast\mathscr F$ of $\Quot_n(\mathscr I_L)$, 
since as we already noticed $\iota^\ast\mathscr F$ has the same support as $\mathscr F$.

Conversely, a $T$-flat quotient $\mathscr F$ of the ideal sheaf $\mathscr I_{L\times T}$ determines a flat family of subschemes
\[
\mathcal Z\subset \mathbb A^3\times T\ra T,
\]
where $L\times T\subset \mathcal Z$. Taking closures inside $X\times T$, we get closed immersions 
\[
C_0\times T\subset \overline{\mathcal Z}\subset X\times T. 
\]
The support of $\mathscr F$ is proper
over $T$, and since $\mathbb A^3$ and $X$ are separated, we see that the inclusion maps of $\Supp\mathscr F$ 
in $\mathbb A^3\times T$ and $X\times T$ are proper. This says that the pushforward $\iota_\ast\mathscr F$ is a coherent sheaf
on $X\times T$. It agrees with the relative ideal of the immersion $C_0\times T\subset \overline{\mathcal Z}$, and is supported
exactly where $\mathscr F$ is. Finally, the short exact sequence
\[
0\ra \iota_\ast\mathscr F\ra \mathscr O_{\overline{\mathcal Z}}\ra \mathscr O_{C_0\times T}\ra 0
\]
says $\mathscr O_{\overline{\mathcal Z}}$ is $T$-flat (being an extension of $T$-flat sheaves), therefore we get a $T$-valued point of $M_n$.
The two constructions are inverse to each other, whence the claim.
\end{proof}

\medskip
Keeping the above result in mind, we will sometimes silently identify $M_n$ with $\Quot_n(\mathscr I_L)$, and 
we will switch from subschemes (or ideal sheaves) to quotient sheaves with no further mention.

\begin{rem}
The resolved conifold $X$ plays little role here. In fact, the above proof shows the following. If there is an immersion
$\mathbb A^3\ra Y$ into some Calabi-Yau threefold $Y$, such that the closure of a line $L\subset \mathbb A^3$ 
becomes a rigid rational curve $C\subset Y$, then the Hilbert scheme $I_{n+1}(Y,[C])$ contains an open 
subscheme isomorphic to $\Quot_n(\mathscr I_L)$.
\end{rem}

\subsection{The DT invariant}\label{sectiontwo}
The open subscheme $M_n\subset I_{n+1}(X,[C_0])$ inherits, by restriction, 
a torus-equivariant symmetric obstruction theory, and therefore an equivariant virtual fundamental class
\[
\bigl[M_n\bigr]^\vir\in A_0^{\mathbf T}(M_n)\otimes \mathbb Q(s_1,s_2,s_3).
\]
The torus $\mathbf T\subset (\mathbb C^{\times})^3$ we are referring to is the two-dimensional torus 
fixing the Calabi-Yau form on $X$, and acting on $X$ by rescaling coordinates. 
We refer the reader to~\cite[\S~2.3]{BB} for more details on this action and for an accurate description of the fixed locus 
\[
I_{m}(X,d[C_0])^{\mathbf T}\subset I_{m}(X,d[C_0])
\]
for every $d>0$. An ideal sheaf $\mathscr I_Z\in M_n$ is $\mathbf T$-fixed
if it becomes a monomial ideal when restricted to the chosen chart $\mathbb A^3\subset X$. 
The fixed locus $M_n^{\mathbf T}\subset M_n$ is isolated and reduced, by~\cite[Lemma $6$ and $8$]{MNOPpaper}. 
In the language of the topological vertex, a $\mathbf T$-fixed ideal can be described as a 
way of stacking $n$ boxes in the corner of the one-legged configuration $(\emptyset, \emptyset, \square)$.
We give an example in Figure~\ref{fig:M1}.

\begin{figure}[h]
\captionsetup{width=0.68\textwidth}
\centering
\begin{tikzpicture}[scale=0.26]
\planepartition{{9,3,3,2},{3,2,1,1},{2,2},{1}}
\end{tikzpicture}
\caption{\small{A $\mathbf T$-fixed ideal in $M_n$.
The ``$z$-axis'' has to be figured as infinitely long,
corresponding to the line $L=C_0\cap \mathbb A^3$.}} \label{fig:M1}
\end{figure}
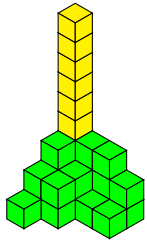

The parity of the tangent space dimension at torus-fixed points of $I_{m}(X,d[C_0])$ was computed in~\cite[Prop.~2.7]{BB}. 
The result is $(-1)^{m-d}$ by an application of~\cite[Thm.~2]{MNOPpaper}. 
In our case $m=n+1$ and $d=1$ so we get the sign $(-1)^n$ for $I_{n+1}(X,[C_0])$.
Since $M_n$ is open in this Hilbert scheme, the parity does not change and we deduce that
\[
(-1)^{\dim T_{M_n}|_{\mathscr I}}=(-1)^n
\]
for all fixed points $\mathscr I\in M_n^{\mathbf T}$. 
After the Calabi-Yau specialization $s_1+s_2+s_3=0$ of the equivariant parameters, 
and by the symmetry of the obstruction theory, the virtual localization formula~\cite{GP} reads
\be\label{virloc}
\bigl[M_n\bigr]^\vir=(-1)^n\bigl[M_n^{\mathbf T}\bigr]\in A_0(M_n),
\ee
where, as mentioned above, the sign 
\[
(-1)^n=\frac{e^{\mathbf T}(\Ext^2(\mathscr I,\mathscr I))}{e^{\mathbf T}(\Ext^1(\mathscr I,\mathscr I))}\in \mathbb Q(s_1,s_2,s_3)
\]
comes from~\cite[Thm.~2]{MNOPpaper}.

We define the Donaldson-Thomas invariant of $M_n$ by equivariant localization through formula \eqref{virloc}. 
Hence we can compute it as
\[
\DT(M_n)=(-1)^n\chi(M_n),
\]
where the Euler characteristic $\chi(M_n)$ counts the number of fixed points.

It is easy to see (see for instance the proof of~\cite[Lemma~2.9]{BB}) that
\be\label{localeuler}
\sum_{n\geq 0}\chi(M_n)q^n=\frac{M(q)}{1-q}
\ee
where $M(q)=\prod_{m\geq 1}(1-q^m)^{-m}$ is the MacMahon function, the generating series of plane partitions.
In particular, the DT partition function for the moduli spaces $M_n$ takes the form
\[
\sum_{n\geq 0}\DT(M_n)q^{n+1}=q\frac{M(-q)}{1+q}=q(1-2q+5q^2-11q^3+\cdots).
\]
In the sum, we have switched indices by one to follow the general convention of weighting the variable $q$ by the holomorphic Euler characteristic.

\section{Curves and Quot schemes}\label{sec:quot}

\subsection{Main characters}
Let $C$ be a Cohen-Macaulay curve embedded in a quasi-projective variety $Y$ and let $\mathscr I_C\subset\mathscr O_Y$ denote its ideal sheaf. 
For an integer $n\geq 0$, 
let $Q=\Quot_n(\mathscr I_C)$ be the Quot scheme parametrizing $0$-dimensional quotients of $\mathscr I_C$, of length $n$.
See~\cite{Nit} for a proof of the representability of the Quot functor in the quasi-projective case. 
By looking at the full exact sequence
\[
0\ra \mathscr I_Z\ra \mathscr I_C\ra F\ra 0
\] 
for a given point $[\mathscr I_C\surj F]$ of $Q$, we think of the Quot scheme as parametrizing curves 
$Z\subset Y$ obtained from $C$, roughly speaking, by adding a finite subscheme of length $n$. 

\begin{defin}\label{def:supponcurve}
We denote by $W^n_C\subset Q$
the closed subset parametrizing quotients $\mathscr I_C\surj F$ such that $\Supp F\subset C$, where
$\Supp F$ denotes the \emph{set-theoretic} support of the sheaf $F$. 
We endow $W^n_C$ with the reduced scheme structure.
\end{defin}

Given a point $[F]\in W^n_C$, the support of $F$ has the structure of a closed subscheme of $Y$ but not 
of $C$ in general; however, $\Supp F$ defines naturally an effective zero-cycle on $C$. 
Sending $[F]$ to this cycle is a morphism, as we now show.

\begin{lemma}\label{quottosym}
There is a natural morphism $u:W^n_C\ra \Sym^nC$ sending a quotient to the corresponding zero-cycle. 
\end{lemma}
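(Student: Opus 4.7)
The plan is to construct $u$ by first producing a Hilbert--Chow type ``cycle'' morphism $\rho : Q \to \Sym^n Y$, which sends each length-$n$ zero-dimensional quotient to its effective zero-cycle on $Y$, and then to factor the restriction $\rho|_{W^n_C}$ through the closed embedding $\Sym^n C \hookrightarrow \Sym^n Y$.

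For the first step, I would invoke the following general fact: a $T$-flat coherent sheaf $\mathcal F$ on $Y \times T$ with zero-dimensional geometric fibres of length $n$ induces a canonical morphism $T \to \Sym^n Y$ whose value at $t$ is the zero-cycle $\sum_p \ell_{\mathscr O_{Y,p}}(\mathcal F_t|_p) \cdot [p]$. This is classical when $\mathcal F = \mathscr O_{\mathcal Z}$, giving the usual Hilbert--Chow morphism $\Hilb^n Y \to \Sym^n Y$ (Fogarty, Iversen), and extends to arbitrary families of finite-length sheaves in Rydh's framework of families of cycles. Applied to the universal quotient on $Q \times Y$, it yields $\rho$.

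For the second step, by the very definition of $W^n_C$, every closed point $[F] \in W^n_C$ satisfies $\Supp F \subset C$, so $\rho$ maps the underlying set of $W^n_C$ into $\Sym^n C$. The closed immersion $C \hookrightarrow Y$ induces a closed immersion $\Sym^n C \hookrightarrow \Sym^n Y$; because $W^n_C$ is reduced by construction and $\Sym^n C$ is reduced (in particular smooth, when $C$ is smooth), the restriction $\rho|_{W^n_C}$ factors uniquely through the closed subscheme $\Sym^n C$, producing the desired morphism $u$.

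The main potential obstacle is the construction of $\rho$ itself: while the Hilbert-scheme case is a textbook result, the Quot-scheme version for arbitrary coherent source sheaves is less often written out, and a fully self-contained proof has to either cite a general reference or glue local models (e.g.\ after compactifying $Y$). Since only the reducedness of $W^n_C$ is ultimately used, a more hands-on alternative is to describe $u$ directly on the dense open stratum of $W^n_C$ parametrizing quotients supported on $n$ distinct reduced points of $C$ (where $u$ evidently returns the unordered $n$-tuple), and then to extend across the closed complement by a stratification argument, stratum by stratum.
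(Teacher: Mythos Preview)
Your main approach---construct a support-cycle morphism $\rho : Q \to \Sym^n Y$ on the entire Quot scheme using the general framework for families of zero-cycles, then factor $\rho|_{W^n_C}$ through $\Sym^n C$ via the reducedness of $W^n_C$---is correct, but it is not the route the paper takes. The paper never builds $\rho$ on all of $Q$. Instead it uses the reducedness of $W^n_C$ from the very first step: for a reduced test scheme $T$ and a $T$-point $\mathscr I_{C\times T}\twoheadrightarrow \mathscr F$ of $W^n_C$, it argues (via Nakayama and constancy of fibre length over a reduced base) that the closed subscheme $\Supp\mathscr F \subset Y\times T$ is itself $T$-flat, hence defines a map $W^n_C \to \Hilb^n Y$; one then composes with the classical Hilbert--Chow morphism $\Hilb^n Y \to \Sym^n Y$ and factors through $\Sym^n C$. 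In short, the paper trades your heavier black box (the Quot-to-Chow map for sheaves, \`a la Rydh) for the textbook Hilbert--Chow map, at the price of only obtaining $u$ on $W^n_C$ rather than on $Q$. Your version has the merit of producing $\rho$ globally on $Q$ and of not relying on any flatness assertion about supports of sheaves.

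One caution about your final paragraph: the ``hands-on alternative''---define $u$ on the dense open locus of quotients supported at $n$ distinct reduced points, then extend ``stratum by stratum''---is not a proof as written. Prescribing a map set-theoretically on each piece of a locally closed stratification does not produce a morphism of schemes on the union; one still needs compatibility in families crossing between strata, which is exactly what the cycle-map machinery (or the paper's flatness-over-a-reduced-base argument) supplies.
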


\begin{proof}
Let $T$ be a reduced scheme, which we take as the base of a valued point $\mathscr I_{C\times T}\surj \mathscr F$ of $W^n_C$.
Let $\pi:Y\times T\ra T$ be the projection. Working locally on $Y$ and $T$ we see that by Nakayama's lemma, 
$\Supp \mathscr F\cap \pi^{-1}(t)=\Supp \mathscr F_t$ for every closed point $t\in T$.
Then the closed subscheme $\Supp \mathscr F\subset Y\times T$ is flat over $T$
(because the Hilbert polynomial of the fibres $\Supp \mathscr F_t$ is the constant $n$ and $T$ is reduced),
and hence defines a valued point $T\ra \Hilb^nY$. Composing with the Hilbert-Chow map $\Hilb^nY\ra \Sym^nY$ we get a morphism
$T\ra \Sym^nY$ which factors through $\Sym^nC$, by definition of $W^n_C$.
\end{proof}

\medskip
For every partition $\alpha$ of $n$ there is a locally closed subscheme
\[
\Sym^n_\alpha C\subset \Sym^nC
\]
parametrizing zero-cycles with multiplicities dictated by $\alpha$. 
These subschemes form a stratification of $\Sym^nC$, which we can use together with the morphism $u$
to stratify $W^n_C$ by locally closed subschemes
\be\label{def:strat}
W^\alpha_C=u^{-1}(\Sym^n_\alpha C)\subset W^n_C.
\ee
In particular, since $\Sym^n_{(n)}C\isom C$, there is a natural morphism 
\be\label{map:zlt}
\pi_C:W^{(n)}_C\ra C
\ee
corresponding to the deepest stratum.

The main result of this section asserts that, when $C$ is a smooth curve and $Y$ is a smooth threefold,
the map \eqref{map:zlt} is a Zariski locally trivial fibration. 
The proof is based on the Quot scheme adaptation of the 
results proven by Behrend and Fantechi for $\Hilb^nY$~\cite[\S~4]{BFHilb}.

Let us now introduce what will turn out to be the typical fibre of $\pi_C$. Recall that
$X$ denotes the resolved conifold and $C_0\subset X$ is the zero section.

\begin{defin}\label{def:punct}
We denote by $F_n\subset M_n$ the closed subset parametrizing subschemes $Z\subset X$ such that the 
relative ideal $\mathscr I_{C_0}/\mathscr I_Z$ is entirely supported at the origin $0\in L=C_0\cap\mathbb A^3$.
We use the shorthand
\[
\nu_n=\nu_{M_n}\big{|}_{F_n}
\]
for the restriction of the Behrend function on $M_n$ to $F_n$. 
\end{defin}

We can think of $F_n$ and all strata $W^\alpha_C\subset W^n_C$ as endowed with the reduced scheme structure.

\begin{rem}
The morphism $u:W^n_C\ra \Sym^nC$ plays the role of the Hilbert-Chow map $\Hilb^nY\ra \Sym^nY$ in the $0$-dimensional setting, and the subscheme $F_n\subset M_n$ is the analogue of the punctual Hilbert scheme 
$\Hilb^n(\mathbb A^3)_0\subset \Hilb^n \mathbb A^3$
parametrizing finite subschemes supported at the origin.
\end{rem}

\begin{prop}\label{yuhjbn}
There is a natural isomorphism $W^{(n)}_L=L\times F_n$. Moreover, if $p:W^{(n)}_L\ra F_n$ is the projection, we have the relation
\be\label{nufunctions}
\nu_{M_n}\big{|}_{W^{(n)}_L}=p^\ast \nu_n.
\ee
\end{prop}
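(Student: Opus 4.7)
The plan is to exploit the translation action of the additive group $\mathbb{G}_a$ along $L$ to trivialize $W^{(n)}_L$ as a product with $F_n$, and then to deduce the Behrend function identity from the fact that $\nu$ is invariant under scheme automorphisms.

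First I would set up the group action. Using coordinates on $\mathbb{A}^3\subset X$ with $L=C_0\cap\mathbb{A}^3$ the $z$-axis and origin $0\in L$, the group $\mathbb{G}_a$ acts on $\mathbb{A}^3$ by translation along $L$, preserving $L$ and its ideal sheaf $\mathscr{I}_L$. Via the isomorphism $M_n\isom\Quot_n(\mathscr{I}_L)$ of Proposition \ref{identhgf}, this induces an action of $\mathbb{G}_a$ on $M_n$ by scheme automorphisms, sending a quotient $[\mathscr{I}_L\surj F]$ to its translate $[\mathscr{I}_L\surj t^\ast F]$. The action is compatible with the morphism $u$ of Lemma \ref{quottosym}: one has $u(t\cdot[F])=t+u([F])$, using the natural identification $\Sym^n_{(n)}L\isom L$.

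Next I would define the candidate isomorphism $\psi:L\times F_n\to W^{(n)}_L$ as the restriction of the action map, $\psi(t,[F])=t\cdot[F]$. It is well-defined because if $F$ is supported at the origin then $t\cdot F$ is supported at $t\in L$, so $t\cdot[F]\in W^{(n)}_L$. The candidate inverse is
\[
\varphi:W^{(n)}_L\to L\times F_n,\qquad [F]\mapsto\bigl(u([F]),\,(-u([F]))\cdot[F]\bigr),
\]
which is a morphism since $u$ and the translation action are. I would check that $\varphi\circ\psi$ and $\psi\circ\varphi$ are the identity on $T$-valued points for reduced test schemes $T$, which combined with the fact that $W^{(n)}_L$ and $F_n$ are reduced yields the desired isomorphism of schemes. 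Under it, $p$ becomes the second projection of the product. The Behrend function identity then follows from automorphism invariance of $\nu$: since $\nu_{M_n}$ is intrinsic and the translations act by scheme automorphisms of $M_n$, it is $\mathbb{G}_a$-invariant, and the $\mathbb{G}_a$-orbits inside $W^{(n)}_L\isom L\times F_n$ are the slices $L\times\{[F]\}$, so
\[
\nu_{M_n}\big|_{W^{(n)}_L}(t,[F])=\nu_{M_n}([F])=\nu_n([F])=(p^\ast\nu_n)(t,[F]),
\]
which is exactly \eqref{nufunctions}.

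The main obstacle I anticipate is making the inverse $\varphi$ rigorous at the level of families: given a family $\mathscr{F}\in W^{(n)}_L(T)$ for a reduced test scheme $T$, one needs to show that translating by $-u(\mathscr{F})$ produces a $T$-flat family of quotients whose set-theoretic support lies on the zero section $T\times\{0\}\subset T\times L$, so that it represents a $T$-valued point of $F_n$. This is a $\mathbb{G}_a$-equivariance check which, while conceptually routine, requires carefully unpacking the reduced scheme structures on $W^{(n)}_L$ and $F_n$ together with the functorial description of $\Quot_n(\mathscr{I}_L)$ used in Proposition \ref{identhgf}.
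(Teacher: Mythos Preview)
Your proposal is correct and follows essentially the same approach as the paper: both use the $\mathbb{G}_a$-translation action along $L$ to define the map $L\times F_n\to W^{(n)}_L$, invert it via the support map $\pi_L$ together with translation by $-\pi_L$, and deduce the Behrend function identity from constancy of $\nu_{M_n}$ on $\mathbb{G}_a$-orbits. The paper's proof is terser and does not dwell on the family-level verification you flag as an obstacle, but since all schemes involved carry the reduced structure this is indeed routine.
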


\begin{proof}
We view $L$ as the additive group $\mathbb G_a$ and we let it act on itself by translation. 
This induces an action of $L$ on $M_n$. Restricting this action to $F_n$ gives a map 
\[
L\times F_n\ra W^{(n)}_L.
\] 
This is an isomorphism, whose inverse is the morphism $\pi_L\times \rho:W^{(n)}_L\ra L\times F_n$, where 
\[
\rho:W^{(n)}_L\ra F_n
\]
takes a subscheme $[Z]\in W^{(n)}_L$ to its translation by $-x\in \mathbb G_a$, where $x\in L=\mathbb G_a$ is the
unique embedded point on $Z$.
The identity \eqref{nufunctions} follows because the Behrend function is constant 
on orbits and for each $P\in F_n$ the slice $L\times \{P\}$ is isomorphic to an orbit.
\end{proof}

\subsection{Comparing Quot schemes}

Let $\varphi:Y\ra Y'$ be a morphism of varieties, where $Y$ is quasi-projective and $Y'$ is complete.
Let $C'\subset Y'$ be a Cohen-Macaulay curve and let $C=\varphi^{-1}(C')\subset Y$ denote its preimage.
We assume $C$ is a Cohen-Macaulay curve and $C'$ is its scheme-theoretic image.
In Lemma \ref{rem:sti} we give sufficient conditions for this to hold.

Given an integer $n\geq 0$, we let $Q=\Quot_n(\mathscr I_C)$ and $Q'=\Quot_n(\mathscr I_{C'})$.

\medskip
We will show how to associate to these data a rational map
\[
\Phi:Q\dashrightarrow Q'.
\]
The rough idea is that we would like to ``push down'' the $n$ points in the support of a sheaf $[F]\in Q$
and still get $n$ points, which would ideally form the support of the image sheaf $\varphi_\ast F$. 
This only works, as one might expect, over the open subscheme $V\subset Q$ parametrizing sheaves 
$F$ such that $\varphi|_{\Supp F}$ is injective. Moreover, the resulting map 
$\Phi:V\ra Q'$ turns out to be \'etale whenever $\varphi$ is. After extending this result to quasi-projective $Y'$, 
we will be able to compare $\Quot_n(\mathscr I_C)$ with the local picture of $M_n=\Quot_n(\mathscr I_L)$, 
and pull back (\'etale-locally) the known results about $\pi_L$ (Proposition~\ref{yuhjbn}) to deduce 
that the maps $\pi_C$ defined in~\eqref{map:zlt} are Zariski locally trivial, at least when $C$ and $Y$ are smooth.

\begin{lemma}\label{rem:sti}
Let $\varphi:Y\ra Y'$ be an \'etale morphism of varieties with image $U$. 
If $C'\subset Y'$ is a Cohen-Macaulay curve and $U\cap C'$ is dense in $C'$, 
then $C=\varphi^{-1}(C')$ is Cohen-Macaulay and $C'$ is its scheme-theoretic image.
\end{lemma}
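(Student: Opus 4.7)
The statement splits into two claims which I plan to handle in turn. First, that $C$ is Cohen-Macaulay is a purely local matter: by base change, the restriction $\varphi|_C:C\ra C'$ is étale, being the pullback of $\varphi$ along $C'\hookrightarrow Y'$. In particular it is flat with zero-dimensional regular fibres. For any $x\in C$ lying over $x'\in C'$, the induced flat local homomorphism $\mathscr O_{C',x'}\ra \mathscr O_{C,x}$ has a field as closed fibre, and the standard ascent of the Cohen-Macaulay property along such morphisms gives that $\mathscr O_{C,x}$ is Cohen-Macaulay because $\mathscr O_{C',x'}$ is. Moreover étale maps preserve pointwise dimension, so $C$ is of pure dimension one, and since Cohen-Macaulay schemes have no embedded points this yields that $C$ is a Cohen-Macaulay curve in the sense of the paper's conventions.

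For the identification of $C'$ as the scheme-theoretic image of $\varphi|_C$, the set-theoretic image is $U\cap C'$, which is dense in $C'$ by hypothesis. I factor $\varphi|_C$ as
\[
C\xrightarrow{g} U\cap C'\xrightarrow{j} C',
\]
where $g$ is surjective étale and $j$ is the open immersion. Faithful flatness of $g$ makes $\mathscr O_{U\cap C'}\ra g_\ast \mathscr O_C$ injective, and applying the left exact functor $j_\ast$ identifies $\ker(\mathscr O_{C'}\ra (\varphi|_C)_\ast \mathscr O_C)$ with $\ker(\mathscr O_{C'}\ra j_\ast \mathscr O_{U\cap C'})$. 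This reduces me to showing that the open immersion $j$ has scheme-theoretic image equal to $C'$.

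Denote $\mathscr K=\ker(\mathscr O_{C'}\ra j_\ast \mathscr O_{U\cap C'})$. This is a coherent ideal sheaf whose support is contained in the zero-dimensional closed set $C'\setminus U$. On the other hand $\mathscr K$ is a subsheaf of $\mathscr O_{C'}$, so every associated prime of $\mathscr K$ is an associated prime of $\mathscr O_{C'}$; because $C'$ is Cohen-Macaulay of pure dimension one, all such primes are minimal and their vanishing loci are the one-dimensional irreducible components of $C'$. A nonzero $\mathscr K$ would therefore be supported in dimension one, contradicting $\Supp\mathscr K\subset C'\setminus U$. Hence $\mathscr K=0$ and $C'$ is the scheme-theoretic image.

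The main point where care is needed, and where the Cohen-Macaulay hypothesis is genuinely used, is this last step: without it, $C'$ could carry embedded zero-dimensional primes, and a nonzero ideal sheaf supported on $C'\setminus U$ would be possible. The Cohen-Macaulay assumption is precisely what rules this out, and it is what makes the reduction from $\varphi|_C$ to the open immersion $j$ tight enough to conclude.
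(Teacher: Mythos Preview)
Your proof is correct and follows essentially the same route as the paper's. Both arguments deduce that $C$ is Cohen--Macaulay from the \'etaleness of $C\to C'$, then reduce the scheme-theoretic image claim to the fact that a dense open in a curve without embedded points has scheme-theoretic closure equal to the whole curve; you simply make the mechanisms more explicit (faithful flatness for the reduction to the open immersion $j$, associated primes for the vanishing of $\mathscr K$) where the paper invokes the absence of embedded points directly.
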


Before proving the lemma, recall that a closed subscheme $C'$ of a scheme $Y'$ is said to have an embedded component if there is a dense open subset 
$U\subset Y'$ such that $U\cap C'$ is dense in $C'$ but its scheme-theoretic closure does not equal $C'$ scheme-theoretically. 
Recall that a curve is Cohen-Macaulay if it has no embedded points.

\medskip
\begin{proof}
Since the restriction $C\ra C'$ is \'etale and $C'$ is Cohen-Macaulay, $C$ is also Cohen-Macaulay.
Moreover, $U$ is open (because $\varphi$ is \'etale) and dense (because $Y'$ is irreducible), and since $U\cap C'\subset C'$ is dense, 
the scheme-theoretic closure of $U\cap C'$ agrees with $C'$ topologically.
But since $C'$ has no embedded points, they in fact agree as schemes. 
On the other hand, the open subset $U\cap C'\subset C'$ is the set-theoretic image of the \'etale map $C\ra C'$.
Therefore its scheme-theoretic closure is the scheme-theoretic image of $C\ra C'$. 
So $C'$ is the scheme-theoretic image of $C$.
\end{proof}

\begin{notation*}
For a scheme $S$, we will denote $\varphi_S=\varphi\times \textrm{id}_S:Y\times S\ra Y'\times S$.
The case $S=Q$ being quite special, we will let $\tilde\varphi$ denote $\varphi_Q=\varphi\times \textrm{id}_Q$. 
\end{notation*}

By our assumptions, $C'\times S$ is the scheme-theoretic image of $C\times S\subset Y\times S$ under $\varphi_S$, for any scheme $S$.
Indeed, $\varphi$ is quasi-compact so the scheme-theoretic image commutes with flat base change.

\begin{rem}\label{alskdj}
Let $\mathscr E$ be the universal sheaf on $Q$, with scheme-theoretic support $\Sigma\subset Y\times Q$. Since $\Sigma\ra Q$ is proper (by the very 
definition of the Quot functor), and it factors through the (separated) projection $\pi:Y'\times Q\ra Q$, necessarily the map 
$\Sigma\ra Y'\times Q$ must be proper. Since $\tilde\varphi_\ast\mathscr E$ is obtained as a pushforward from $\Sigma$, it is coherent.
Therefore, pushing forward coherent sheaves supported on $\Sigma$ will still give us coherent sheaves, even if $\varphi$ is not proper.
\end{rem}

\begin{rem}
Let $[F]\in Q$ be any point, and let $\mathscr I_Z\subset \mathscr I_C$ be the kernel of the surjection.
Then we have closed immersions $C\subset Z\subset Y$ and $C'\subset Z'\subset Y'$, where $Z'$ denotes the scheme-theoretic image of $Z$.
Using that $R^1\varphi_\ast F=0$, we find a commutative diagram of coherent $\mathscr O_{Y'}$-modules
\[
\begin{tikzcd}
0\arrow{r} & 
\mathscr I_{C'}/\mathscr I_{Z'}\arrow{r}\arrow[hook]{d} & 
\mathscr O_{Z'}\arrow{r}\arrow[hook]{d} &
\mathscr O_{C'}\arrow{r}\arrow[hook]{d} &
0\\
0\arrow{r} &
\varphi_\ast F\arrow{r} &
\varphi_\ast\mathscr O_Z\arrow{r} &
\varphi_\ast\mathscr O_C\arrow{r} &
0
\end{tikzcd}
\]
having exact rows. The middle and right vertical arrows are monomorphisms by definition of scheme-theoretic image. For instance,
\[
\mathscr I_{C'}=\ker\bigl(\mathscr O_{Y'}\surj \mathscr O_{C'}\bigr)=\ker\bigl(\mathscr O_{Y'}\surj \mathscr O_{C'}\ra \varphi_\ast\mathscr O_C\bigr)
\]
implies that $\mathscr O_{C'}\ra \varphi_\ast\mathscr O_C$ is injective.
\end{rem}

The previous remark can be made universal. Let $\mathscr I_{C\times Q}\surj \mathscr E$
be the universal quotient, living over $Y\times Q$. Looking at its kernel $\mathscr I_{\mathcal Z}$, 
we get a commutative diagram
\[
\begin{tikzcd}
C\times Q\arrow{d}\arrow[hook]{r} & 
\mathcal Z\arrow{d}\arrow[hook]{r} &
Y\times Q\arrow{d}{\tilde\varphi}\\
C'\times Q\arrow[hook]{r} &
\mathcal Z'\arrow[hook]{r} &
Y'\times Q
\end{tikzcd}
\]
where the horizontal arrows are closed immersions, $\tilde\varphi=\varphi\times \textrm{id}_Q$ and
$\mathcal Z'$ denotes the scheme-theoretic image of $\mathcal Z$.
We also get a commutative diagram of coherent $\mathscr O_{Y'\times Q}$-modules
\[
\begin{tikzcd}
0\arrow{r} & 
\mathscr I_{C'\times Q}/\mathscr I_{\mathcal Z'}\arrow{r}\arrow[hook]{d} & 
\mathscr O_{\mathcal Z'}\arrow{r}\arrow[hook]{d} &
\mathscr O_{C'\times Q}\arrow{r}\arrow[hook]{d} &
0\\
0\arrow{r} &
\tilde\varphi_\ast\mathscr E\arrow{r} &
\tilde\varphi_\ast\mathscr O_{\mathcal Z}\arrow{r} &
\tilde\varphi_\ast\mathscr O_{C\times Q}\arrow{r} &
0
\end{tikzcd}
\]
having exact rows.

\medskip

Let us consider the composition
\be\label{grhtjy}
\alpha:\mathscr I_{C'\times Q}\twoheadrightarrow \mathscr I_{C'\times Q}/\mathscr I_{\mathcal Z'}\hookrightarrow \tilde\varphi_\ast\mathscr E
\ee
and let us write $\mathscr K$ for its cokernel. 
By Remark \ref{alskdj}, $\tilde\varphi_\ast\mathscr E$ is coherent, hence $\mathscr K=\coker\alpha$ is coherent, too. 
Thus $\Supp \mathscr K$ is closed in $Y'\times Q$. Since $Y'$ is complete, the projection $\pi:Y'\times Q\ra Q$ is closed. Therefore the complement
\be\label{definofu}
Q\setminus \pi(\Supp \mathscr K)\subset Q
\ee
is an open subset of $Q$.

\begin{prop}\label{prop:big}
Let $[F]\in Q$ be a point such that $\varphi$ is \'etale in a neighborhood of $\Supp F$ and 
$\varphi(x)\neq\varphi(y)$ for all distinct points $x,y\in\Supp F$.
Then there is an open neighborhood $U\subset Q$ of $[F]$ admitting an \'etale map $\Phi:U\ra Q'$.
\end{prop}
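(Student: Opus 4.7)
The plan is to exploit the two hypotheses on $\varphi$ to reduce, in a neighborhood of $[F]$, to the case where $\varphi$ is an open immersion, so that $\Phi$ becomes an essentially tautological identification of open loci in $Q$ and $Q'$.

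First, I would use that $\Supp F$ is a finite set of closed points. The \'etaleness of $\varphi$ in an open neighborhood of $\Supp F$, combined with injectivity of $\varphi$ on $\Supp F$, allows one to shrink and find an open $W\subset Y$ containing $\Supp F$ on which $\varphi|_W$ is \'etale and universally injective, hence an open immersion onto $W':=\varphi(W)\subset Y'$. Under this identification, $W\cap C\isom W'\cap C'$ and $\mathscr I_C|_W\isom \mathscr I_{C'}|_{W'}$.

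Next, I would take $U\subset Q$ to be the open subscheme parametrizing quotients with support contained in $W$; this is open because $\Supp\mathscr E\ra Q$ is proper, and contains $[F]$ by construction. Let $U'\subset Q'$ be the analogous locus. The central step is to realize $\Phi|_U$ as a natural isomorphism $U\isom U'$. Given a $T$-valued point of $U$, the universal quotient is supported in $W\times T$, so one can transport it across the open immersion $\varphi\times\textrm{id}_T:W\times T\isom W'\times T$ and extend by zero to obtain a quotient $\mathscr I_{C'\times T}\surj\mathscr G$ on $Y'\times T$. The sheaf $\mathscr G$ is $T$-flat because its pushforward to $T$ is canonically identified with that of the universal quotient on $U$, hence is locally free of rank $n$. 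Reversing the construction gives the inverse $U'\ra U$. Moreover, under this identification both $\mathscr I_{C'\times T}/\mathscr I_{\mathcal Z'}$ and $\tilde\varphi_\ast\mathscr E$ equal $\mathscr G$, so $\alpha$ becomes an isomorphism on $U$; consequently $U$ lies in the locus \eqref{definofu}, and $\Phi|_U$ factors as $U\isom U'\hookrightarrow Q'$, which is an open immersion and in particular \'etale.

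The main obstacle will be the bookkeeping in this last step: verifying that the scheme-theoretic image $\mathcal Z'$ of the universal subscheme coincides over $U$ with the extension by zero from $W'\times U$. This reduces to showing that $\mathcal Z|_U$ has no associated points outside $W\times U$ (so that one can freely pass between $\mathcal Z$ and $\mathcal Z|_{W\times U}$ without losing information) and that the formation of the ideal of $\mathcal Z'$ commutes with the open immersion on the source, both of which follow once the support of the universal quotient is confined to $W$.
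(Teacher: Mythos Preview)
Your reduction step is where the argument breaks: it is not true in general that an \'etale morphism which is injective on the finite set $\Supp F$ restricts to an open immersion on some Zariski open neighborhood $W$ of $\Supp F$. Take for instance $\varphi:\mathbb A^1\setminus\{0\}\ra\mathbb P^1$, $z\mapsto z^2$, and $\Supp F=\{1\}$. Any Zariski open $W\ni 1$ is cofinite, the involution $z\mapsto -z$ carries $W$ to another cofinite set, so $W\cap(-W)\neq\emptyset$ and $\varphi|_W$ is never injective. The same obstruction persists in higher dimension and in the charts $U_i\ra\mathbb A^3$ used later in the paper: a sheet-swapping symmetry cannot be destroyed by removing a proper closed subset. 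In the analytic or \'etale topology such a $W$ does exist, but then your $U\subset Q$ would no longer be a Zariski open subscheme, which is what the statement requires.

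In fact your stronger conclusion, that $\Phi$ is an open immersion near $[F]$, is itself false and not only the method: in the example above, length-one quotients supported at $z$ and at $-z$ have the same image in $Q'$, so $\Phi$ is genuinely non-injective on every Zariski neighborhood of $[F]$. The paper's proof confronts exactly this point. In Step~2 it shrinks $Y$ so that $\varphi^{-1}(\varphi(b))=\{b\}$ for each $b\in\Supp F$, but then explicitly observes that this condition ``is not preserved in any open neighborhood of $[F]$, it is preserved infinitesimally''; the key isomorphism $\varphi^\ast\varphi_\ast F\,\widetilde{\ra}\,F$ survives on Artinian thickenings, and the infinitesimal lifting criterion is run from there. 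That infinitesimal argument is precisely the idea missing from your proposal.
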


\begin{proof}
We first observe that we may reduce to prove the result after restricting $Y$ to \emph{any} open neighborhood of $\Supp F$ inside $Y$.
Indeed, if $V$ is any such neighborhood, $\Quot_n(\mathscr I_C|_V)$ is an open subscheme of $Q$ that still contains $[F]$ as a point. 
We will take advantage of this freedom by choosing a suitable $V$. We divide the proof in two steps.

\medskip
\emph{Step $1$}:\emph{ Existence of the map.}
Let $Z\subset Y$ be the closed subscheme determined by the kernel of $\mathscr I_C\surj F$.
Let $Z'\subset Y'$ be its scheme-theoretic image. 
Since $\varphi|_{\Supp F}$ is injective and $\varphi$ is \'etale around $\Supp F$, the natural monomorphism 
$\mathscr I_{C'}/\mathscr I_{Z'}\ra \varphi_\ast F$ is an isomorphism and $\varphi_\ast F$ is a sheaf of length $n$, 
so that we get a well-defined point
\be\label{pointtt}
[\varphi_\ast F]\in Q'.
\ee
Now let $B\subset Y$ denote the support of $F$ and let $V$ be an open 
neighborhood of $B$ such that $\varphi$ is \'etale when restricted to $V$. We may assume $V$ is affine, and in fact we may also assume
$Y=V$, by our initial remark. 

In this situation, we have the Cartesian square
\[
\begin{tikzcd}[row sep=large, column sep=large]
Y\times [F]\MySymb{dr}\arrow[swap]{d}{\varphi} \arrow[hook]{r}{i} & Y\times Q \arrow{d}{\tilde\varphi}\\
Y'\times [F] \arrow[hook]{r}{j} & Y'\times Q
\end{tikzcd}
\]
where the map $\tilde\varphi$ is affine. Therefore, working affine-locally on $Y'\times Q$, we see that the natural base change map
$j^\ast\tilde\varphi_\ast\mathscr E\,\,\widetilde{\ra}\,\,\varphi_\ast F$ is an isomorphism. This proves that the surjection
$\mathscr I_{C'}\surj \varphi_\ast F$ defining the point \eqref{pointtt} is obtained precisely restricting
$\alpha:\mathscr I_{C'\times Q}\ra\tilde\varphi_\ast\mathscr E$, defined in \eqref{grhtjy}, to the slice 
\[
j:Y'\times [F]\subset Y'\times Q.
\]
Letting $U\subset Q$ denote the open subset defined in \eqref{definofu}, we see that $\alpha$ restricts to a surjection
\[
\alpha|_{Y'\times U}:\mathscr I_{C'\times U}\surj \varphi_{U\ast}\mathscr E_U, 
\]
where $\mathscr E_U=\mathscr E|_{Y\times U}$. The target is a coherent sheaf, and it is flat over $U$.
Indeed, $\mathscr E$ is flat over $Q$, thus $\tilde\varphi_\ast \mathscr E$ is also flat over $Q$. But  
$\varphi_{U\ast}\mathscr E_U$ is naturally isomorphic to the pullback of $\tilde\varphi_\ast \mathscr E$ along the open immersion
$Y'\times U\subset Y'\times Q$, therefore it is flat over $U$. Finally, the map $\alpha|_{Y'\times U}$ restricts to length $n$ quotients
\[
\mathscr I_{C'}\surj \varphi_\ast E,
\]
for any closed point $[E]\in U$. Therefore we have just constructed a morphism
\[
\Phi:U\ra Q',\qquad [E]\mapsto [\varphi_\ast E].
\]

\emph{Step $2$}:\emph{ Proving it is \'etale.} 
We may shrink $Y$ further and replace it by any affine open neighborhood of $B=\Supp F$ contained in $Y\setminus A$, 
where $A$ is the closed subset
\[
A=\coprod_{b\in B}\varphi^{-1}\varphi(b)\setminus\{b\}\subset Y.
\]
After this choice, the preimage $Y_{\varphi(b)}$ is the single point $\{b\}$, for every $b\in B$. 
This condition implies that the natural morphism
\be\label{pushpull}
\varphi^\ast\varphi_\ast F\,\,\widetilde{\ra}\,\,F
\ee
is an isomorphism. Although this condition is not preserved in any open neighborhood of $[F]$, it is preserved infinitesimally, which
is exactly what we need to establish \'etaleness.

We now use the infinitesimal criterion to show $\Phi$ is \'etale at the point $[F]$.
Let $\iota:T\ra \overline T$ be a small extension of fat points. 
Assume we have a commutative square
\[
\begin{tikzcd}[row sep=large, column sep=large]
T\arrow[hook]{r}{\iota}\arrow[swap]{d}{g} & \overline T\arrow{d}{h}\arrow[dotted]{dl}[description]{v}\\
U\arrow{r}{\Phi} & Q'
\end{tikzcd}
\]
where $g$ sends the closed point $0\in T$ to $[F]$. Then we want to find a \emph{unique} arrow 
$v$ making the two induced triangles commutative. Rephrasing this in terms of families of sheaves, let
$\mathscr I_{C\times T}\surj \mathscr G$ and $\mathscr I_{C'\times \overline T}\surj \mathscr H$
be the families corresponding to $g$ and $h$, living over $Y\times T$ and $Y'\times \overline T$ respectively. We are after a unique $U$-valued family $\mathscr I_{C\times \overline T}\surj \mathscr V$
over $Y\times \overline T$ with the following properties.
\bitem
\item [$(\star)$] The condition $\Phi\circ v=h$ means we can find a commutative diagram
\[
\begin{tikzcd}
\mathscr I_{C'\times\overline T}\arrow[two heads]{r}\arrow[swap]{d}{\textrm{id}} &\varphi_{\overline T\ast}\mathscr V\arrow{d}{\simeq}\\
\mathscr I_{C'\times \overline T}\arrow[two heads]{r} & \mathscr H
\end{tikzcd}\qquad \textrm{of sheaves on }Y'\times\overline T.
\]

Let us explain the condition in detail. We use, in the following, the notation $\tilde p=1_Y\times p$ and $\overline p=1_{Y'}\times p$, for a given map $p$.
Looking at the diagram
\[
\begin{tikzcd}[row sep=large, column sep=large]
Y\times \overline T\arrow[swap]{d}{\tilde v}\arrow{r}{\varphi_{\overline T}} &
Y'\times \overline T\arrow{d}{\overline v} \\
Y\times U\arrow[swap,hook]{d}{} \arrow{r}{\varphi_U} &
Y'\times U\arrow{d}{\overline \Phi} \\
Y\times Q & Y'\times Q'
\end{tikzcd}
\]
we should require
\[
\mathscr H\isom \overline v^\ast \overline\Phi^\ast \mathscr E',
\]
where $\mathscr E'$ is the universal quotient sheaf on $Y'\times Q'$. However, 
\[
\overline v^\ast \overline\Phi^\ast \mathscr E'\isom \overline v^\ast\varphi_{U\ast}\mathscr E_U\isom \varphi_{\overline T\ast}\mathscr V,
\]
where we have used ``affine base change'' again. 
\item [$(\star\star)$] Looking at
\[
\begin{tikzcd}[row sep=large, column sep=large]
Y\times T\MySymb{dr}\arrow[hook,swap]{d}{\tilde \iota} \arrow{r}{\varphi_T} & Y'\times T\arrow[hook]{d}{\overline \iota}\\
Y\times \overline T\arrow{r}{\varphi_{\overline T}} & Y'\times \overline T,
\end{tikzcd}
\]
the condition $v\circ\iota=g$ means we can find a commutative diagram 
\[
\begin{tikzcd}
\tilde \iota^\ast \mathscr I_{C\times\overline T}\arrow[two heads]{r}\arrow[swap]{d}{\textrm{id}} & \tilde \iota^\ast \mathscr V\arrow{d}{\simeq}\\
\mathscr I_{C\times T}\arrow[two heads]{r} & \mathscr G
\end{tikzcd}\qquad \textrm{of sheaves on }Y\times T.
\]
\eitem

We observe that 
\bitem
\item [(i)] the isomorphism $\varphi_{\overline T\ast}\mathscr V\,\,\widetilde{\ra}\,\, \mathscr H$ defining $(\star)$, and
\item [(ii)] the isomorphism $\varphi_{\overline T}^\ast\varphi_{\overline T\ast}\mathscr V\,\,\widetilde{\ra}\,\,\mathscr V$, 
the ``infinitesimal thickening'' of \eqref{pushpull},
\eitem
together determine $v$ uniquely: it is the \emph{unique} arrow corresponding to the isomorphism class of the surjection
\[
\mathscr I_{C\times\overline T}=\varphi_{\overline T}^\ast \mathscr I_{C'\times\overline T}\surj
\varphi_{\overline T}^\ast\mathscr H=\mathscr V. 
\]
To check that condition $(\star\star)$ is fulfilled by this family,
we use that $\Phi\circ g=h\circ \iota$. In other words, there is a commutative diagram
\[
\begin{tikzcd}
\overline\iota^\ast \mathscr I_{C'\times\overline T}\arrow[two heads]{r}\arrow[swap]{d}{\textrm{id}} & \overline\iota^\ast\mathscr H\arrow{d}{\simeq}\\
\mathscr I_{C'\times T}\arrow[two heads]{r} & \varphi_{T\ast}\mathscr G
\end{tikzcd}\qquad \textrm{of sheaves on }Y'\times T.
\]
As before, we have noted that the family corresponding to $\Phi\circ g$ is 
\[
\overline g^\ast\varphi_{U\ast}\mathscr E_U\isom \varphi_{T\ast}\mathscr G,
\]
where $\overline g$ is the map $\textrm{id}_{Y'}\times g:Y'\times T\ra Y'\times U$. 
Now we can compute
\[
\tilde \iota^\ast \mathscr V=\tilde \iota^\ast \varphi_{\overline T}^\ast\mathscr H\isom\varphi_T^\ast\overline\iota^\ast\mathscr H\isom
\varphi_T^\ast\varphi_{T\ast}\mathscr G\isom\mathscr G.
\]

This finishes the proof.
\end{proof}

\begin{cor}\label{alpino}
Let $\varphi:Y\ra Y'$ be an \'etale map of quasi-projective varieties, $C'\subset Y'$ a Cohen-Macaulay curve with preimage $C$.
Let $V\subset Q$ be the open subset parametrizing quotients $\mathscr I_C\surj F$ such that 
$\varphi(x)\neq \varphi(y)$ for all $x\neq y\in \Supp F$. Then there is an \'etale map $\Phi:V\ra Q'$.
\end{cor}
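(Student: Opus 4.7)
The plan is to adapt the proof of Proposition \ref{prop:big} to the quasi-projective setting; the construction of the map $\alpha$ in \eqref{grhtjy}, of its cokernel $\mathscr K$, and of the candidate open set $U$ in \eqref{definofu}, together with Steps $1$ and $2$ of the original argument, are all purely local sheaf calculations on $Y\times Q$ and $Y'\times Q$ that use no properness of $Y'$. The only place in the original proof where the completeness of $Y'$ was genuinely invoked is in establishing the openness of $U\subset Q$, which was deduced from the closedness of the projection $\pi:Y'\times Q\to Q$. I would replace this by a direct properness argument that uses only the properness of the universal Quot support.

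By Remark \ref{alskdj}, the map $\tilde\varphi|_{\Supp\mathscr E}:\Supp\mathscr E\to Y'\times Q$ is proper (since $\Supp\mathscr E\to Q$ is proper by the very definition of the Quot scheme and $Y'\times Q\to Q$ is separated), with image the closed subset $\tilde\varphi(\Supp\mathscr E)=\Supp\tilde\varphi_\ast\mathscr E$. Its composition with $\pi$ is the structural morphism $\Supp\mathscr E\to Q$, which is proper; combined with $\tilde\varphi|_{\Supp\mathscr E}$ being a proper surjection onto $\Supp\tilde\varphi_\ast\mathscr E$, this forces the restriction of $\pi$ to $\Supp\tilde\varphi_\ast\mathscr E$ to be proper, and in particular closed. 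Since $\Supp\mathscr K\subset\Supp\tilde\varphi_\ast\mathscr E$ is closed, its image $\pi(\Supp\mathscr K)\subset Q$ is closed, so $U$ is open.

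It remains to verify that $V\subset U$. For $[F]\in V$, the fibre $\alpha_{[F]}:\mathscr I_{C'}\to\varphi_\ast F$ factors through the inclusion $\mathscr I_{C'}/\mathscr I_{Z'}\hookrightarrow\varphi_\ast F$; both source and target have length $n$ (the target because $\varphi$ is \'etale and injective on $\Supp F$), so this inclusion is an isomorphism, $\alpha_{[F]}$ is surjective, and $[F]\notin\pi(\Supp\mathscr K)$. With these openness facts in place, Steps $1$ and $2$ of the proof of Proposition \ref{prop:big} apply without change to produce the \'etale morphism $\Phi:U\to Q'$ defined on closed points by $[E]\mapsto[\varphi_\ast E]$, and restricting to $V\subset U$ yields the desired map. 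The main obstacle is the openness step above, the one genuine use of completeness in Proposition \ref{prop:big}, which is cleanly bypassed by the properness of the universal Quot support.
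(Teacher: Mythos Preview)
Your argument is correct, but the paper takes a different and shorter route. Rather than reopening the proof of Proposition~\ref{prop:big} and patching the single step that used completeness, the paper simply \emph{compactifies}: it chooses a proper variety $\overline{Y'}\supset Y'$, lets $\overline{C'}$ be the scheme-theoretic closure of $C'$, and applies Proposition~\ref{prop:big} verbatim to the composite $Y\to\overline{Y'}$. This yields an \'etale map $\Phi:V\to\overline{Q'}=\Quot_n(\mathscr I_{\overline{C'}})$, and since the support of $\iota_\ast\varphi_\ast F$ lies in $Y'$ for every $[F]\in V$, the map factors through the open subscheme $Q'\subset\overline{Q'}$.

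Your approach has the advantage of being intrinsic: it avoids the compactification and the (small) checks that come with it, such as verifying that $\overline{C'}$ remains Cohen--Macaulay and that the hypotheses preceding Proposition~\ref{prop:big} are still met. Your key observation---that $\Supp\mathscr K\subset\tilde\varphi(\Supp\mathscr E)$ and that the latter is proper over $Q$ because $\Supp\mathscr E\to Q$ is proper and factors through it---is exactly the right substitute for closedness of $\pi$, and it makes clear that completeness of $Y'$ was never essential. The paper's approach, on the other hand, is more economical as a corollary: it invokes the proposition as a black box rather than revisiting its internals. Both are perfectly valid.
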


\begin{proof}
To apply Proposition \ref{prop:big}, we need the target to be complete. Therefore, after completing $Y'$ 
to a proper variety $\overline{Y'}$, let us denote by $\overline{C'}$ the scheme-theoretic closure of $C'$. 
Then, Proposition \ref{prop:big} gives us an \'etale map 
$\Phi:V\ra \overline{Q'}$, where the target is the scheme of length $n$ quotients of $\mathscr I_{\overline{C'}}$. 
The map sends $[F]\mapsto [\iota_\ast\varphi_\ast F]$, where $\iota:Y'\ra \overline{Y'}$ is the open immersion. 
However, the support of $\iota_\ast\varphi_\ast F$ can be identified with $\Supp (\varphi_\ast F)\subset Y'$ 
for all $[F]$, so that $\Phi$ actually factors through $Q'$.
\end{proof}

\subsection{Applications to threefolds}
In this section we assume $Y$ and $Y'$ are quasi-projective threefolds. 
All the other assumptions and notations from the previous sections remain unchanged here.

If $\varphi:Y\ra Y'$ is an \'etale map, we see that the induced morphism
\[
\Phi:V\ra Q'
\]
of Corollary \ref{alpino}, when restricted to the closed stratum $W^{(n)}_C\subset V$, appears in a Cartesian diagram
\be\label{crucial}
\begin{tikzcd}[row sep=large, column sep=large]
W^{(n)}_C\MySymb{dr}\arrow{r}{\pi_C} \arrow[swap]{d}{\Phi} & C\arrow{d}{\varphi}\\
W^{(n)}_{C'}\arrow{r}{\pi_{C'}} & C'
\end{tikzcd}
\ee
where the horizontal maps were defined in \eqref{map:zlt}.
Let $V'\subset Q'$ be the image of the \'etale map $\Phi:V\ra Q'$. Then the commutative diagram
\[
\begin{tikzcd}
W^{(n)}_C \arrow[hook]{r}\arrow[swap]{d}{\Phi} & V\arrow{d}{\textrm{\'et}}\arrow[hook]{r}{\textrm{open}} & Q\\
W^{(n)}_{C'}\arrow[hook]{r} & V' \arrow[hook]{r}{\textrm{open}} & Q'
\end{tikzcd}
\]
yields the relation 
\be\label{hgfcvb}
\nu_Q\big{|}_{W^{(n)}_C}=\Phi^\ast\bigl(\nu_{Q'}\big{|}_{W^{(n)}_{C'}}\bigr),
\ee
which will be useful in the next proof.

\begin{prop}\label{lekrjt}
Let $\varphi:Y\ra \mathbb A^3$ be an \'etale map of quasi-projective threefolds, and let $L\subset\mathbb A^3$ be a line. 
\bitem
\item [\emph{(i)}] If $C=\varphi^{-1}(L)\subset Y$, we have a natural isomorphism $W^{(n)}_C=C\times F_n$.
\item [\emph{(ii)}] The restricted Behrend function $\nu_Q\big{|}_{W^{(n)}_C}$ agrees with the pullback of $\nu_n$ 
under the natural projection to $F_n$.
\eitem
\end{prop}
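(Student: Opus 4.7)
The plan is to assemble (i) and (ii) from three ingredients already in place: Corollary \ref{alpino} applied with $Y'=\mathbb A^3$ and $C'=L$, the Cartesian square \eqref{crucial}, and Proposition \ref{yuhjbn} describing the local picture over the line. First I would set $Q'=\Quot_n(\mathscr I_L)=M_n$ (via Proposition \ref{identhgf}) and observe that Corollary \ref{alpino} provides an \'etale morphism $\Phi\colon V\to Q'$, where $V\subset Q$ is the open locus of quotients on which $\varphi$ is injective on the set-theoretic support. Since any $[F]\in W^{(n)}_C$ has support at a single closed point of $C$, injectivity of $\varphi|_{\Supp F}$ is automatic, so $W^{(n)}_C\subset V$ and $\Phi$ is defined on the whole deepest stratum.

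Next I would invoke the Cartesian square \eqref{crucial} to rewrite
\[
W^{(n)}_C\;\cong\; C\times_{L}W^{(n)}_L.
\]
Proposition \ref{yuhjbn} identifies $W^{(n)}_L$ with $L\times F_n$ in such a way that $\pi_L$ is the projection to the first factor. Plugging this in gives
\[
W^{(n)}_C\;\cong\; C\times_{L}(L\times F_n)\;\cong\; C\times F_n,
\]
which proves (i). Under this identification, $\Phi$ becomes $\varphi\times \mathrm{id}_{F_n}$, and the projection $W^{(n)}_C\to F_n$ is the projection to the second factor.

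For (ii) I would combine relation \eqref{hgfcvb} with the Behrend identity \eqref{nufunctions} of Proposition \ref{yuhjbn}. Writing $p\colon W^{(n)}_L\to F_n$ for the projection, we get
\[
\nu_Q\big|_{W^{(n)}_C}\;=\;\Phi^\ast\bigl(\nu_{Q'}\big|_{W^{(n)}_L}\bigr)\;=\;\Phi^\ast p^\ast\nu_n\;=\;(p\circ\Phi)^\ast\nu_n,
\]
and $p\circ\Phi$ is precisely the natural projection $C\times F_n\to F_n$ by the previous paragraph. The only non-formal point is knowing that \eqref{crucial} is Cartesian, which is the content of the construction of $\Phi$ in Proposition \ref{prop:big}; the hard part is really already behind us in that proposition, and what remains here is essentially bookkeeping.
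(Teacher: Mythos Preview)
Your argument is correct and follows essentially the same route as the paper: you invoke the Cartesian square \eqref{crucial} together with Proposition~\ref{yuhjbn} to obtain $W^{(n)}_C\cong C\times_L(L\times F_n)\cong C\times F_n$, and then chain \eqref{hgfcvb} with \eqref{nufunctions} to pull back $\nu_n$. The extra details you spell out (why $W^{(n)}_C\subset V$, and the identification $Q'=M_n$ via Proposition~\ref{identhgf}) are consistent with the paper's setup and only make the reasoning more explicit.
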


\begin{proof}
With the help of \eqref{crucial}, we find a diagram
\[
\begin{tikzcd}[row sep=large, column sep=large]
& W^{(n)}_C\MySymb{dr}\arrow{r}{\pi_C}\arrow[swap]{d}{\Phi} & C\MySymb{dr}\arrow[hook]{r}{} \arrow[swap]{d}{} & Y\arrow{d}{\textrm{\'et}}\\
F_n & W^{(n)}_L\arrow{r}{\pi_L}\arrow[swap]{l}{p} & L\arrow[hook]{r} & \mathbb A^3
\end{tikzcd}
\]
so that the first claim follows by the isomorphism $W^{(n)}_L=L\times F_n$ of Proposition \ref{yuhjbn}. 
As for Behrend functions, we have, using \eqref{hgfcvb} and \eqref{nufunctions},
\[
\nu_Q\big{|}_{W^{(n)}_C}=\Phi^\ast\bigl(\nu_{M_n}\big{|}_{W^{(n)}_L}\bigr)=\Phi^\ast\bigl(p^\ast\nu_n\bigr).
\]
The claim follows.
\end{proof}

\medskip
The following can be viewed as the analogue of~\cite[Cor.~4.9]{BFHilb}.

\begin{cor}\label{pingu}
Let $Y$ be a smooth quasi-projective threefold. If $C\subset Y$ is a smooth curve, the map 
\[
\pi_C:W^{(n)}_C\ra C
\]
is a Zariski locally trivial fibration with fibre $F_n$.
More precisely, there exists a Zariski open covering $C_i\subset C$ such that for all $i$ one has an isomorphism
\be\label{strangedec}
(\pi_C^{-1}(C_i),\nu_Q)\isom (C_i,1)\times (F_n,\nu_n)
\ee
of schemes with constructible functions on them.
\end{cor}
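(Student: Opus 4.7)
The plan is to reduce the statement to Proposition \ref{lekrjt} by covering $C$ with Zariski opens that admit étale projections to the line $L\subset\mathbb A^3$. Concretely, given a point $p\in C$, I would first produce a Zariski open neighborhood $U\subset Y$ of $p$ together with an étale morphism $\varphi:U\ra \mathbb A^3$ such that $\varphi^{-1}(L)=C\cap U$ scheme-theoretically. This should be standard: since $C\subset Y$ is a regular embedding of codimension two in a smooth threefold, I can choose regular functions $x_1,x_2\in\mathscr O_Y(U)$ cutting out $C\cap U$ scheme-theoretically, and a function $x_3\in\mathscr O_Y(U)$ restricting to a uniformizer of $C$ at $p$; then the triple $\varphi=(x_1,x_2,x_3)$ is étale at $p$, and by shrinking $U$ I may assume $\varphi$ is étale everywhere. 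The preimage $\varphi^{-1}(L)$ is étale over $L$, so it is a disjoint union of smooth curves, and after a further Zariski shrinking of $U$ around $p$ I may discard the components other than the one containing $p$, thereby ensuring $\varphi^{-1}(L)=C\cap U$.

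Next, setting $C_i=C\cap U_i$ for the opens $U_i$ obtained in the previous paragraph, Proposition \ref{lekrjt} applied to $\varphi_i:U_i\ra \mathbb A^3$ yields an isomorphism $W^{(n)}_{C_i}\isom C_i\times F_n$, together with the identification $\nu_{Q_i}\big{|}_{W^{(n)}_{C_i}}=p_i^\ast \nu_n$, where $Q_i=\Quot_n(\mathscr I_{C_i})$ and $p_i:C_i\times F_n\ra F_n$ is the projection.

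To patch these local isomorphisms into the claimed Zariski local triviality of $\pi_C:W^{(n)}_C\ra C$, I would observe that $\pi_C^{-1}(C_i)\subset W^{(n)}_C$ consists exactly of those quotients of $\mathscr I_C$ whose set-theoretic support is a single point lying on $C_i$. Such a point lies in $U_i$, so by the local nature of the Quot construction the natural map $\Quot_n(\mathscr I_{C_i})\ra \Quot_n(\mathscr I_C)$ is an open immersion, and the subscheme $W^{(n)}_{C_i}\subset Q_i$ gets identified with $\pi_C^{-1}(C_i)\subset W^{(n)}_C$. Since the Behrend function is Zariski local, $\nu_Q$ restricted to $\pi_C^{-1}(C_i)$ coincides with $\nu_{Q_i}$ restricted to $W^{(n)}_{C_i}$, and Proposition \ref{lekrjt}(ii) delivers the desired identity \eqref{strangedec}.

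The main obstacle is the first step, since one must genuinely obtain a \emph{Zariski} neighborhood (not just an étale one) of each point of $C$ on which a straightening étale coordinate system to $\mathbb A^3$ exists and pulls $L$ back to $C$ with no extra components. All subsequent ingredients are local/functorial properties of Quot schemes and Behrend functions, and follow once the étale model has been identified via Proposition \ref{lekrjt}.
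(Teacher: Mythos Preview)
Your proposal is correct and follows essentially the same route as the paper: both arguments produce, around each point of $C$, a Zariski open $U_i\subset Y$ with an \'etale map $U_i\to\mathbb A^3$ pulling $L$ back to $C_i=C\cap U_i$, then invoke Proposition~\ref{lekrjt} to identify $W^{(n)}_{C_i}$ with $C_i\times F_n$ together with the Behrend function. The only cosmetic difference is that where you spell out the open immersion $\Quot_n(\mathscr I_{C_i})\hookrightarrow\Quot_n(\mathscr I_C)$ and the Zariski locality of $\nu$, the paper packages this step by applying diagram~\eqref{crucial} to the open immersion $U_i\hookrightarrow Y$; also note that your extra shrinking to discard spurious components of $\varphi^{-1}(L)$ is in fact unnecessary, since $\varphi^{-1}(L)=V(x_1,x_2)=C\cap U$ already holds by your choice of $x_1,x_2$.
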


\begin{proof}
Cover $Y$ with open affine subschemes $U_i$ such that, for each $i$, the closed immersion $C_i=C\cap U_i\subset U_i$ is given, when $C_i$ is nonempty, 
by the vanishing of two equations. We can do this because $C$ is a local complete intersection. 
Possibly after shrinking each $U_i$, we can find \'etale maps $U_i\ra \mathbb A^3$ and (using the smoothness of $C$) Cartesian diagrams
\[
\begin{tikzcd}[row sep=large, column sep=large]
C_i\MySymb{dr}\arrow[hook]{r}\arrow{d} & U_i\arrow{d}{\textrm{\'et}}\\
L\arrow[hook]{r} & \mathbb A^3
\end{tikzcd}
\]
where $L$ is a fixed line in $\mathbb A^3$. Combining \eqref{crucial} with (both statements of) Proposition \ref{lekrjt} yields Cartesian diagrams
\[
\begin{tikzcd}[row sep=large, column sep=large]
C_i\times F_n\MySymb{dr}\arrow{r}{\pi_{C_i}}\arrow[hook]{d} & C_i\arrow[hook]{d}\\
W^{(n)}_{C}\arrow{r}{\pi_C} & C
\end{tikzcd}
\]
and the claimed decomposition \eqref{strangedec}.
\end{proof}

\section{The weighted Euler characteristic of $Q^n_C$}\label{DTinvar}

The goal of this section is to prove the following result, anticipated in the Introduction.

\begin{teo}\label{thm:bfcomp}
Let $Y$ be a smooth quasi-projective threefold, $C\subset Y$ a smooth curve. If $Q^n_C=\Quot_n(\mathscr I_C)$, then
\[
\tilde\chi(Q^n_C)=(-1)^n\chi(Q^n_C).
\]
\end{teo}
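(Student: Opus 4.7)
The plan is to combine a stratification of $Q^n_C$ by how the length-$n$ quotient sits relative to $C$ with two external inputs: the Behrend--Fantechi identity $\tilde\chi(\Hilb^m Z)=(-1)^m\chi(\Hilb^m Z)$ for any smooth quasi-projective threefold $Z$~\cite{BFHilb}, and the punctual analogue $\tilde\chi(F_m)=(-1)^m\chi(F_m)$. The first is quoted; the second must be produced from the torus-localisation identity $\tilde\chi(M_m)=(-1)^m\chi(M_m)$ proved in \S\ref{sectiontwo}.

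First I would stratify $Q^n_C$ using the canonical splitting $F=F_C\oplus F_{Y\setminus C}$ of a $0$-dimensional quotient $\mathscr I_C\surj F$ into its direct summands supported on $C$ and off $C$. Since the two supports are automatically disjoint, this splitting is functorial in families and yields a scheme-theoretic stratification
\[
Q^n_C=\bigsqcup_{n_1+n_2=n}W^{n_1}_C\times\Hilb^{n_2}(Y\setminus C),
\]
along which the Behrend function factors as the box product by multiplicativity of $\nu$ on products. Applying~\cite{BFHilb} to the Hilbert-scheme factor reduces the theorem to the identity $\tilde\chi(W^n_C)=(-1)^n\chi(W^n_C)$. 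For this I would stratify $W^n_C$ along $u\colon W^n_C\to\Sym^n C$ by partition type and upgrade Corollary~\ref{pingu} from the deepest stratum to every stratum $W^\alpha_C$ with $\alpha\vdash n$: applying the \'etale local model of Corollary~\ref{alpino} simultaneously in pairwise disjoint affine neighbourhoods of the $\ell(\alpha)$ support points gives a Zariski locally trivial fibration $W^\alpha_C\to\Sym^n_\alpha C$ with fibre $\prod_i F_{\alpha_i}$ and $\nu_Q|_{W^\alpha_C}$ pulled back from $\prod_i\nu_{\alpha_i}$. Granted the punctual identity, multiplicativity then gives $\tilde\chi(W^\alpha_C)=(-1)^n\chi(W^\alpha_C)$ (using $\sum_i\alpha_i=n$), and summing over $\alpha$ completes the reduction.

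To supply $\tilde\chi(F_m)=(-1)^m\chi(F_m)$, I would run the very same two-step analysis on $M_m=\Quot_m(\mathscr I_L)$ with $L\subset\mathbb A^3$. Because $\chi(\mathbb A^3\setminus L)=0$, Cheah's formula $\sum_n\chi(\Hilb^n(\mathbb A^3\setminus L))q^n=M(q)^{\chi(\mathbb A^3\setminus L)}=1$ collapses the on/off-$L$ convolution to $\chi(W^m_L)=\chi(M_m)$ and $\tilde\chi(W^m_L)=\tilde\chi(M_m)$; and because ordered configuration spaces of $\geq 2$ points on $\mathbb A^1$ are Euler-acyclic, the partition stratification of $W^m_L$ collapses to the single stratum $W^{(m)}_L=L\times F_m$ of Proposition~\ref{yuhjbn}, giving $\chi(F_m)=\chi(M_m)$ and $\tilde\chi(F_m)=\tilde\chi(M_m)$. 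Substituting the equivariant identity $\tilde\chi(M_m)=(-1)^m\chi(M_m)$ from \S\ref{sectiontwo} finishes the punctual case, and hence the whole theorem.

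The main technical obstacle is the generalisation of Corollary~\ref{pingu} to an arbitrary stratum $W^\alpha_C$: one must patch the \'etale charts of Corollary~\ref{alpino} around the various support points into a trivialisation over a Zariski neighbourhood in $\Sym^n_\alpha C$, and verify that the restriction of $\nu_Q$ there decomposes as the box product of the punctual Behrend functions $\nu_{\alpha_i}$. I expect this to follow from pairwise disjoint analytic separation of the support points combined with the multiplicativity of $\nu$ under products of moduli spaces with disjoint-support data, but it is the step requiring genuine new work beyond what is developed in~\S\ref{sec:quot}.
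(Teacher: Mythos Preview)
Your overall strategy---stratify by on/off-$C$ and by partition type of the embedded support, then reduce to Behrend--Fantechi's Hilbert scheme formula plus a punctual identity for $F_j$---is the same as the paper's. Two points, however, need correction.

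First, the claim that $\nu_{Q^n_C}$ restricted to the stratum $W^{n_1}_C\times\Hilb^{n_2}(Y\setminus C)$ ``factors as the box product by multiplicativity of $\nu$ on products'' is not justified: the identity $\nu_{A\times B}=\nu_A\boxtimes\nu_B$ concerns the Behrend function \emph{of} a product, not the restriction of an ambient Behrend function to a locally closed stratum that merely happens to be a product. The paper never asserts such a pointwise factorisation. Instead it builds, via Corollary~\ref{alpino}, a single \'etale map $D_\alpha\to Q^n_C$ from an \emph{open} subscheme $D_\alpha\subset\Hilb^{n-j}Y\times\prod_iQ^{\alpha_i}_C$, combining the on/off-$C$ split and the partition refinement in one step, and then manipulates only \emph{relative} weighted Euler characteristics $\tilde\chi(-,-)$ via the rules (B1)--(B4). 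This also dissolves your ``main technical obstacle'': rather than generalising Corollary~\ref{pingu} to $W^\alpha_C\to\Sym^n_\alpha C$, the paper passes to the Galois cover $Z_\alpha=\prod_iW^{(\alpha_i)}_C\setminus\Delta\to W^\alpha_C$ of degree $|G_\alpha|$ and applies (B3). Each factor of $Z_\alpha$ is already a deepest stratum, so Corollary~\ref{pingu} as stated suffices and no extension is required.

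Second, \S\ref{sectiontwo} does \emph{not} prove $\tilde\chi(M_m)=(-1)^m\chi(M_m)$. It \emph{defines} $\DT(M_n)$ by equivariant virtual localisation and observes that the resulting number equals $(-1)^n\chi(M_n)$; nothing there identifies this with the Behrend-weighted Euler characteristic (and Behrend's theorem does not apply, as $M_n$ is not proper). The paper instead takes the punctual identity $\chi(F_j,\nu_j)=(-1)^j\chi(F_j)$ directly from~\cite[Cor.~3.5]{BFHilb}, which gives $\nu_{M_j}(P)=(-1)^{\dim T_PM_j}=(-1)^j$ at each isolated torus-fixed point; since $M_j^{\mathbf T}\subset F_j$ and Euler characteristics of constructible functions localise to the fixed locus, the formula follows. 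Your collapse argument on $M_m$ (using $\chi(\mathbb A^3\setminus L)=0$ and $\chi$ of ordered configurations of $\geq 2$ points on $\mathbb A^1$ vanishing) is correct once the \'etale machinery above is in place, but it still requires~\cite[Cor.~3.5]{BFHilb} or an equivalent Behrend-function localisation statement as the ultimate input---\S\ref{sectiontwo} alone does not supply it.
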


\subsection{Ingredients in the proof}
We briefly discuss the main tools used in the proof of the above formula.

\subsubsection{Stratification}
We start by observing that we have a stratification
\begin{equation}\label{qwert}
Q^n_C=\coprod_{\substack{0\leq j\leq n \\ \alpha\vdash j}}\Hilb^{n-j}(Y\setminus C)\times W^\alpha_C
\end{equation}
by locally closed subschemes, ``separating'' the points \emph{away from} the curve from those embedded \emph{on} the curve.
We think of a partition $\alpha\vdash j$ as a tuple of positive integers
\[
\alpha_1\geq \cdots \geq \alpha_{r_{\alpha}}\geq 1
\]
such that $\sum\alpha_i=j$. Here $r_\alpha$ is the number of distinct parts of $\alpha$. Recall that 
\[
W^\alpha_C\subset Q^j_C,
\] 
defined for the first time in \eqref{def:strat}, 
parametrizes configurations of $r_{\alpha}$ distinct embedded points on $C$, 
having respective multiplicities $\alpha_1,\dots,\alpha_{r_{\alpha}}$.
According to \eqref{qwert}, it is natural to expect the number
\[
\tilde\chi(Q^n_C)=\chi(Q^n_C,\nu_{Q^n_C})
\]
to be computed combining the following data. 

First of all, contributions from $\Hilb^{n-j}(Y\setminus C)$ are taken care of by~\cite[Thm.~4.11]{BFHilb}, which implies the formula
\be\label{punctual}
\tilde\chi(\Hilb^k(Y\setminus C))=(-1)^k\chi(\Hilb^k(Y\setminus C)).
\ee
Secondly, contributions from $W^\alpha_C\subset W^j_C$ will be fully expressed (thanks to the content of the previous section) in terms of the 
deepest stratum. The only relevant character here is the ``punctual'' locus $F_n$. It will be enough to know that 
\be\label{localnu}
\chi(F_j,\nu_j)=(-1)^j\chi(F_j),
\ee
which follows from~\cite[Cor.~3.5]{BFHilb}. Note that here $\chi(F_j)=\chi(M_j)$ counts the number of fixed points
of the torus action we have recalled in \S~\ref{sectiontwo}.

\subsubsection{The Behrend function}\label{behfun}
According to~\cite{Beh}, any complex scheme $Z$ carries a canonical constructible function 
$\nu_Z:Z\ra\mathbb Z$ and one can consider the weighted Euler characteristic
\[
\tilde\chi(Z)=\chi(Z,\nu_Z)=\sum_{k\in\mathbb Z}k\chi(\nu_Z^{-1}(k)).
\]
Given a morphism $f:Z\ra X$, one also has the relative weighted Euler characteristic
\[
\tilde\chi(Z,X)=\chi(Z,f^\ast\nu_X).
\]
We now list its main properties following~\cite[Prop.~1.8]{Beh}.
First of all, it is clear that $\tilde\chi(Z)=\tilde\chi(Z,Z)$ through the identity map on $Z$.
\bitem
\item [(B1)]  If $Z=Z_1\amalg Z_2$ for $Z_i\subset Z$ locally closed, then 
\[
\tilde\chi(Z,X)=\tilde\chi(Z_1,X)+\tilde\chi(Z_2,X).
\]
\item [(B2)]  Given two morphisms $Z_i\ra X_i$, $i=1,2$, we have 
\[
\tilde\chi(Z_1\times Z_2,X_1\times X_2)=\tilde\chi(Z_1,X_1)\cdot \tilde\chi(Z_2,X_2).
\]  
\item [(B3)] Given a commutative diagram
\[
\begin{tikzcd} 
Z \arrow{r}{} \arrow{d}{}
&X \arrow{d}{}\\
W \arrow{r}{} &Y
\end{tikzcd}
\]
with $X\ra Y$ smooth and $Z\ra W$ finite \'etale of degree $d$, we have
\[
\tilde\chi(Z,X)=d(-1)^{\dim X/Y}\tilde\chi(W,Y).
\]
\item [(B4)] This is a special case of (B3): if $X\ra Y$ is \'etale (e.g.~an open immersion), then $\tilde \chi(Z,X)=\tilde\chi(Z,Y)$.
\eitem

\subsection{The computation}

We can start the proof of Theorem~\ref{thm:bfcomp}. Let us shorten $Y_0=Y\setminus C$ for convenience.
After fixing a partition $\alpha\vdash j$, let 
\[
V_\alpha\subset \prod_iQ^{\alpha_i}_C
\]
denote the open subscheme consisting of tuples $(F_1,\dots,F_{r_{\alpha}})$
of sheaves with pairwise disjoint support. According to Corollary \ref{alpino}, we can use the 
\'etale cover $\amalg_iY\ra Y$ to produce an \'etale morphism 
\[
f_\alpha:V_\alpha\ra Q^j_C.
\]
It is given on points by taking the ``union'' of the $0$-dimensional supports of the sheaves $F_i$.
Letting $U_\alpha$ be the image of $f_\alpha$, we can form the diagram
\[
\begin{tikzcd}[row sep=large]  
Z_\alpha\MySymb{dr}\arrow[hook]{r}{} \arrow[swap]{d}{\textrm{Galois}} & V_\alpha \arrow{d}{f_\alpha}\arrow[hook]{r}{\textrm{open}} & \prod_iQ^{\alpha_i}_C\\
W^\alpha_C \arrow[hook]{r}{} & U_\alpha\arrow[hook]{r}{\textrm{open}} & Q^j_C
\end{tikzcd}
\]
where the Cartesian square defines the scheme $Z_\alpha$. The morphism on the left is Galois with Galois group $G_\alpha$, the automorphism group of the partition $\alpha$.
It is easy to see that in fact
\[
Z_\alpha=\prod_iW^{(\alpha_i)}_C\setminus \Delta
\]
also fits in the Cartesian square
\begin{equation}\label{mistery}
\begin{tikzcd}[row sep=large] 
Z_\alpha\MySymb{dr}\arrow[hook]{r}{\textrm{open}} \arrow{d}{}
&\prod_iW^{(\alpha_i)}_C \arrow{d}{\pi_\alpha}\\
C^{r_{\alpha}}\setminus \Delta \arrow[hook]{r}{\textrm{open}} &C^{r_{\alpha}}
\end{tikzcd}
\end{equation}
where $W^{(\alpha_i)}_C\subset Q^{\alpha_i}_C$ is the deep stratum, $\Delta$ denotes 
the ``big diagonal'' (where at least two entries are equal), 
and the vertical map $\pi_\alpha$ is the product of the fibrations 
$\pi_C:W^{(\alpha_i)}_C\ra C$, for $i=1,\dots,r_\alpha$.

\medskip
We need two identities before we can finish the computation.

\medskip
\noindent
\textbf{First identity.} We have
\be\label{lem:chiwalpha}
\chi(W^\alpha_C)=|G_\alpha|^{-1}\chi(C^{r_{\alpha}}\setminus \Delta)\prod_i\chi(F_{\alpha_i}).
\ee

Indeed, for each $\alpha$, the map
\[
\pi_\alpha:Z_\alpha \ra C^{r_{\alpha}}\setminus \Delta
\]
appearing in \eqref{mistery} is Zariski locally trivial with fiber $\prod_iF_{\alpha_i}$ by Corollary \ref{pingu}.
Formula \eqref{lem:chiwalpha} follows since $W^\alpha_C$ is the free quotient $Z_\alpha/G_\alpha$.

\medskip
\noindent
\textbf{Second identity.} We have 
\be\label{eq:gna}
\tilde\chi\bigl(Z_\alpha,\prod_iQ^{\alpha_i}_C\bigr)=\chi(C^{r_{\alpha}}\setminus \Delta) \prod_i\chi(F_{\alpha_i},\nu_{\alpha_i}).
\ee

Indeed, by Corollary \ref{pingu}, we can find a Zariski open cover $\{B_s\}_s$ of $C^{r_{\alpha}}\setminus \Delta$
such that 
\[
(\pi_\alpha^{-1}B_s,\nu)\isom (B_s,1_{B_s})\times \Bigl(\prod_iF_{\alpha_i},\prod_i\nu_{\alpha_i}\Bigr).
\]
In the left hand side, $\nu$ denotes the Behrend function restricted from $\prod_iQ^{\alpha_i}_C$. 
We can refine this to a locally closed stratification 
$\amalg_\ell U_\ell=C^{r_{\alpha}}\setminus \Delta$ such that each $U_\ell$ is contained in some $B_s$. Therefore,
\begin{align*}
\tilde\chi\bigl(Z_\alpha,\prod_iQ^{\alpha_i}_C\bigr)&=\sum_\ell \tilde\chi\bigl(\pi_\alpha^{-1}U_\ell,\prod_iQ^{\alpha_i}_C\bigr) 
\qquad\qquad\,\,\,\,\,\, \textrm{\small{by (B1)}}\\
&=\sum_\ell\chi\bigl(U_\ell\times \prod_iF_{\alpha_i},1_{U_\ell}\times\prod_i \nu_{\alpha_i}\bigr)\\
&=\sum_\ell\chi(U_\ell,1_{U_\ell}) \prod_i\chi(F_{\alpha_i},\nu_{\alpha_i})
\qquad\,\,\,\, \textrm{\small{by (B2)}}\\
&=\chi(C^{r_{\alpha}}\setminus \Delta) \prod_i\chi(F_{\alpha_i},\nu_{\alpha_i}),
\end{align*}
and \eqref{eq:gna} is proved.

\medskip
Note that combining \eqref{qwert} and \eqref{lem:chiwalpha} we get
\be\label{euler}
\chi(Q^n_C)=\sum_{j,\alpha}\chi(\Hilb^{n-j}Y_0)\cdot |G_\alpha|^{-1}\chi(C^{r_{\alpha}}\setminus \Delta)\prod_i\chi(F_{\alpha_i}).
\ee

We now have all the tools to finish the computation.
Let us fix $j$ and a partition $\alpha\vdash j$. We define
\[
D_\alpha\subset \Hilb^{n-j}Y\times \prod_iQ^{\alpha_i}_C
\]
to be the set of tuples $(Z_0,F_1,\dots,F_{r_{\alpha}})$ such that $(F_1,\dots,F_{r_{\alpha}})\in V_\alpha$ and the support of
$Z_0$ does not meet the support of any $F_i$. Then $D_\alpha$ is an open subscheme. 
The Galois cover $1\times f_\alpha:\Hilb^{n-j}Y_0\times Z_\alpha\ra \Hilb^{n-j}Y_0\times W^\alpha_C$ extends to an \'etale map
$D_\alpha\ra Q^n_C$, so that we have a commutative diagram
\be\label{comm1}
\begin{tikzcd}[row sep=large] 
\Hilb^{n-j}Y_0\times Z_\alpha \arrow[hook]{r}{} \arrow[swap]{d}{1\times f_\alpha}
&D_\alpha \arrow{d}{\textrm{\'et}}\\
\Hilb^{n-j}Y_0\times W^\alpha_C \arrow[hook]{r}{} & Q^n_C.
\end{tikzcd}
\ee
Therefore we can start computing $\tilde\chi(Q^n_C)=\chi(Q^n_C,\nu_{Q^n_C})$ as follows:
\begin{align*}
\tilde\chi(Q^n_C)&=\sum_{j,\alpha}\tilde\chi(\Hilb^{n-j}Y_0\times W^\alpha_C,Q^n_C)
\qquad\qquad\,\,\small{\textrm{by (B1) applied to \eqref{qwert}}} \\
&=\sum_{j,\alpha}|G_\alpha|^{-1}\tilde\chi(\Hilb^{n-j}Y_0\times Z_\alpha,D_\alpha)\qquad\small{\textrm{by (B3) applied to \eqref{comm1}}}\\
&=\sum_{j,\alpha}|G_\alpha|^{-1}\tilde\chi\bigl(\Hilb^{n-j}Y_0\times Z_\alpha,\Hilb^{n-j}Y\times \prod_iQ^{\alpha_i}_C\bigr) 
\,\,\,\,\,\,\,\,\,\,\,\small{\textrm{by (B4)}}\\
&=\sum_{j,\alpha}|G_\alpha|^{-1}\tilde\chi\bigl(\Hilb^{n-j}Y_0,\Hilb^{n-j}Y\bigr)\cdot 
\tilde\chi\bigl(Z_\alpha,\prod_iQ^{\alpha_i}_C\bigr)\,\,\,\,\,\,\,\,\,\,\small{\textrm{by (B2)}}\\
&=\sum_{j,\alpha}|G_\alpha|^{-1}\tilde\chi(\Hilb^{n-j}Y_0)\cdot \chi(C^{r_{\alpha}}\setminus \Delta)
\prod_i\chi(F_{\alpha_i},\nu_{\alpha_i}) \qquad \small{\textrm{by (B4) and~\eqref{eq:gna}}}\\
&=(-1)^n\sum_{j,\alpha}\chi(\Hilb^{n-j}Y_0)\cdot |G_\alpha|^{-1}\chi(C^{r_{\alpha}}\setminus\Delta) 
\prod_i\chi(F_{\alpha_i}) \,\,\,\,\, \small{\textrm{by \eqref{punctual} and \eqref{localnu}}}\\
&=(-1)^n\chi(Q^n_C) \qquad \small{\textrm{by~\eqref{euler}}}.
\end{align*}
This completes the proof of Theorem~\ref{thm:bfcomp}.

\begin{question} 
It would be nice to determine whether the Behrend function on $M_n=\Quot_n(\mathscr I_L)$ 
is the constant sign $(-1)^n$. As far as we know, this is still open even when the curve is absent, i.e.~for $\Hilb^n\mathbb A^3$. 
\end{question}

\section{Ideals, pairs and quotients}\label{dtptwallcrossing}

In this section we give some applications of the formula
\[
\tilde\chi(Q^n_C)=(-1)^n\chi(Q^n_C).
\]
We show that the DT/PT correspondence holds for the contribution 
of a smooth \emph{rigid} curve in a projective Calabi-Yau threefold. 
We discuss, at a conjectural level, the case of an arbitrary smooth curve.

\subsection{Local contributions}
We fix a smooth projective threefold $Y$ and a 
Cohen-Macaulay curve $C\subset Y$ 
of arithmetic genus $g=1-\chi(\mathscr O_C)$, embedded in class $\beta\in H_2(Y,\mathbb Z)$. 
We will use the Quot scheme to endow the closed subset
\[
\set{Z\subset Y|C\subset Z,\,\chi(\mathscr I_C/\mathscr I_Z)=n}\subset I_{1-g+n}(Y,\beta)
\]
with a natural scheme structure.

\begin{lemma}\label{closedIMM}
There is a closed immersion $\iota:Q^n_C\ra I_{1-g+n}(Y,\beta)$.
\end{lemma}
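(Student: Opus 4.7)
The plan is to construct $\iota$ as a morphism of functors, verify it lands in the Hilbert scheme with the correct invariants, show it is a monomorphism, and then invoke properness to upgrade this to a closed immersion.

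First I would define $\iota$ on $T$-valued points. A point of $Q^n_C(T)$ is a quotient $\mathscr I_{C\times T}\twoheadrightarrow \mathscr F$ with $\mathscr F$ flat and finite of relative length $n$ over $T$. Denoting the kernel by $\mathscr I_{\mathcal Z}\subset \mathscr I_{C\times T}\subset \mathscr O_{Y\times T}$, I view $\mathcal Z$ as a closed subscheme of $Y\times T$ containing $C\times T$. I then need to verify that $\mathcal Z\to T$ is flat with the right numerical data: from the short exact sequence
\[
0\ra \mathscr F\ra \mathscr O_{\mathcal Z}\ra \mathscr O_{C\times T}\ra 0
\]
obtained by the snake lemma, $T$-flatness of $\mathscr O_{\mathcal Z}$ follows from $T$-flatness of $\mathscr F$ and of $\mathscr O_{C\times T}$. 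Taking Euler characteristics fiberwise gives $\chi(\mathscr O_{\mathcal Z_t})=\chi(\mathscr O_C)+n=1-g+n$, and since $\mathscr F_t$ is zero-dimensional the homology class of $\mathcal Z_t$ equals $[C]=\beta$. This produces the desired map $\iota:Q^n_C\to I_{1-g+n}(Y,\beta)$, and functoriality in $T$ is straightforward.

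Next I would show $\iota$ is a monomorphism of functors. Two $T$-points of $Q^n_C$ with the same image $\mathcal Z$ give the same subsheaf $\mathscr I_{\mathcal Z}\subset \mathscr O_{Y\times T}$, hence the same subsheaf of $\mathscr I_{C\times T}$. The two quotient sheaves $\mathscr F_i=\mathscr I_{C\times T}/\mathscr I_{\mathcal Z}$ are then canonically equal together with their surjections from $\mathscr I_{C\times T}$, so the two $T$-points coincide. Hence $\iota$ is injective on $T$-points for every $T$.

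Finally I would promote monomorphism to closed immersion via properness. Since $Y$ is projective and $\mathscr I_C$ is coherent, $Q^n_C=\Quot_n(\mathscr I_C)$ is projective by Grothendieck's construction, and $I_{1-g+n}(Y,\beta)$ is projective as well. Thus $\iota$ is proper; a proper monomorphism of noetherian schemes is automatically a closed immersion, finishing the proof.

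The only real subtlety I expect is the bookkeeping in the first paragraph — making sure the snake-lemma sequence behaves well in families and that the constant family $C\times T\to T$ is genuinely flat (which it is, as $C$ is $\mathbb C$-flat) — but this is routine. Everything else is formal from the universal properties of $\Quot$ and $\Hilb$ plus the projectivity of both moduli spaces.
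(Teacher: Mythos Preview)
Your proof is correct and follows essentially the same approach as the paper: construct $\iota$ functorially via the kernel of the universal quotient, use the short exact sequence $0\to\mathscr F\to\mathscr O_{\mathcal Z}\to\mathscr O_{C\times T}\to 0$ to verify flatness and the numerical invariants, observe that $\iota$ is a monomorphism, and conclude by properness of the Quot scheme. The paper's proof is slightly terser but the logic is identical.
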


\begin{proof}
Let $\mathscr I_{C\times T}\surj\mathscr F$ be a flat family of quotients parametrized by a scheme $T$.
Letting $Z\subset Y\times T$ be the subscheme defined by the kernel of the surjection, we get an exact sequence
\[
0\ra \mathscr F\ra \mathscr O_Z\ra \mathscr O_{C\times T}\ra 0.
\]
The middle term is flat over $T$, therefore it determines a point in the Hilbert scheme of $Y$. The discrete invariants $\beta$ and $\chi=1-g+n$
are the right ones, as can be seen by restricting the above short exact sequence to closed points of $T$. Therefore we get a morphism
\[
\iota:Q^n_C\ra I_{1-g+n}(Y,\beta).
\]
The correspondence at the level of functor of points is injective, and the morphism is proper (since the Quot scheme is proper,
as $Y$ is projective). Therefore $\iota$ is a closed immersion.
\end{proof}

\begin{defin}\label{defquot}
We define
\be\label{localmoduli}
I_n(Y,C)\subset I_{1-g+n}(Y,\beta)
\ee
to be the scheme-theoretic image of $\iota:Q^n_C\ra I_{1-g+n}(Y,\beta)$.
\end{defin}

\begin{rem}
The closed subset $|I_n(Y,C)|\subset I_{1-g+n}(Y,\beta)$ also has a scheme structure induced by 
GIT wall-crossing~\cite{ST}. Another scheme structure is defined in the recent paper~\cite{BrKo}. See in particular
Definition 4, where the notation used is $\Hilb^n(Y,C)$. We believe both these scheme structures agree with 
the one of our Definition \ref{defquot}, in which case they describe schemes isomorphic to $Q^n_C$. 
\end{rem}

Assume $Y$ is a projective \emph{Calabi-Yau} threefold.
By the main result of~\cite{Beh}, the degree $\beta$ curve counting invariants
\[
\DT_{m,\beta}=\int_{[I_m(Y,\beta)]^\vir}1,\qquad \PT_{m,\beta}=\int_{[P_m(Y,\beta)]^\vir}1
\]
can be computed as weighted Euler characteristics of the corresponding moduli spaces, since the obstruction theories
defining the virtual cycles are symmetric. One can define the contribution of $C$ to the above invariants as
\be\label{contrib}
\DT_{n,C}=\chi(I_n(Y,C),\nu_I),\qquad \PT_{n,C}=\chi(P_n(Y,C),\nu_P).
\ee
Here we have set $I=I_{1-g+n}(Y,\beta)$ and $P=P_{1-g+n}(Y,\beta)$. The subscheme $P_n(Y,C)\subset P$ consists of stable pairs with Cohen-Macaulay support equal to $C$.
Note that these integers remember how $C$ sits inside $Y$, since the weight is the Behrend function coming from the full moduli space.

An immediate consequence of Theorem~\ref{thm:bfcomp} is a formula for the DT contribution of a smooth rigid curve.

\begin{teo}\label{rigidthm}
Let $Y$ be a projective Calabi-Yau threefold, $C\subset Y$ a smooth rigid curve. Then
\[
\DT_{n,C}=(-1)^n\chi(I_n(Y,C)).
\]
\end{teo}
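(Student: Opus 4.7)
The plan is to reduce the statement to Theorem~\ref{thm:bfcomp} by establishing that, under the rigidity hypothesis on $C$, the closed immersion $\iota:Q^n_C\ra I=I_{1-g+n}(Y,\beta)$ of Lemma~\ref{closedIMM} is simultaneously an open immersion. If $I_n(Y,C)$ is both open and closed in $I$, then on the one hand it is isomorphic to $Q^n_C$ as a scheme (the scheme-theoretic image of a closed immersion agrees with its source), and on the other hand the ambient Behrend function $\nu_I$ restricts to the intrinsic Behrend function $\nu_{I_n(Y,C)}$. The identity to be proved then follows directly from Theorem~\ref{thm:bfcomp} applied to $Q^n_C$.

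The crucial step is the openness of $\iota(Q^n_C)\subset I$. I would argue that, near a point $[Z]\in \iota(Q^n_C)$, any small deformation of $Z$ in $I$ induces a deformation of its purely one-dimensional Cohen-Macaulay part $Z_{\mathrm{CM}}$, and rigidity forces this deformation to be trivial. More concretely, the sublocus of $I$ parametrizing curves $Z$ whose Cohen-Macaulay one-dimensional part has fundamental class $\beta$ is open by semicontinuity, and on this open sublocus the rule $[Z]\mapsto [Z_{\mathrm{CM}}]$ defines (at least étale-locally) a morphism to the Hilbert scheme of Cohen-Macaulay curves in $Y$ of class $\beta$. The hypothesis $H^0(C,N_{C/Y})=0$ combined with the smoothness of $C$ makes $[C]$ a reduced isolated point of this latter Hilbert scheme, so the preimage $\iota(Q^n_C)$ is open in $I$.

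Once openness is established the remainder is formal. Property (B4) of \S~\ref{DTinvar} (invariance of the Behrend weighted Euler characteristic under open immersions) gives $\nu_I\big{|}_{I_n(Y,C)}=\nu_{I_n(Y,C)}$, and combining everything,
\[
\DT_{n,C}=\chi(I_n(Y,C),\nu_I)=\tilde\chi(I_n(Y,C))=\tilde\chi(Q^n_C)=(-1)^n\chi(Q^n_C)=(-1)^n\chi(I_n(Y,C)),
\]
where the penultimate equality is Theorem~\ref{thm:bfcomp}. The main obstacle is therefore the openness claim, the unique place where the rigidity assumption enters the argument; it rests on standard deformation theory of smooth curves in threefolds together with the construction of the Cohen-Macaulay-part morphism in families. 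Everything beyond this is functoriality of $\nu$ and an invocation of the main theorem of the paper.
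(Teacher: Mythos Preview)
Your proposal is correct and follows essentially the same approach as the paper: the paper's proof simply asserts that the inclusion \eqref{localmoduli} is open and closed ``thanks to the infinitesimal isolation of $C$'', deduces $\nu_I|_{I_n(Y,C)}=\nu_{I_n(Y,C)}$, and invokes Theorem~\ref{thm:bfcomp}. Your argument is the same, only with the openness step spelled out in more detail via the Cohen--Macaulay--part morphism and $H^0(C,N_{C/Y})=0$.
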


\begin{proof}
The inclusion \eqref{localmoduli} is both open and closed thanks to the infinitesimal isolation of $C$. Then $\nu_I|_{I_n(Y,C)}=\nu_{I_n(Y,C)}$, thus
\[
\DT_{n,C}=\tilde\chi(I_n(Y,C))=(-1)^n\chi(I_n(Y,C)),
\]
as claimed. 
\end{proof}

\begin{rem}
In the rigid case, $\DT_{n,C}$ is a DT invariant in the classical sense,
namely it is the degree of the virtual class $[I_n(Y,C)]^\vir$ 
obtained by restricting the one on $I_{1-g+n}(Y,\beta)$.
\end{rem}

\medskip
Theorem \ref{rigidthm} can be seen as an instance of the following more general result,
which is also a direct consequence of Theorem~\ref{thm:bfcomp}.

\begin{prop}\label{corgenseries}
Let $Y$ be a smooth projective threefold. If $C\subset Y$ is a smooth curve of genus $g$, then
\be\label{dfghj}
\sum_{n\geq 0}\tilde\chi(I_n(Y,C))q^n=M(-q)^{\chi(Y)}(1+q)^{2g-2}.
\ee
\end{prop}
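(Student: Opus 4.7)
The plan is to leverage Theorem~\ref{thm:bfcomp} to reduce the weighted Euler characteristic computation to an unweighted one, and then to extract the generating function via the stratification already used in its proof. Since $\iota:Q^n_C\ra I_{1-g+n}(Y,\beta)$ is a closed immersion (Lemma~\ref{closedIMM}), it induces an isomorphism $Q^n_C\isom I_n(Y,C)$ of schemes, so the intrinsic Behrend function agrees on both sides. Hence Theorem~\ref{thm:bfcomp} gives
\[
\tilde\chi(I_n(Y,C))=(-1)^n\chi(Q^n_C),
\]
and it remains to compute the ordinary Euler characteristic generating series $\sum_n\chi(Q^n_C)q^n$ and then substitute $q\mapsto -q$.

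First I would use the stratification \eqref{qwert} of $Q^n_C$ to factor the generating function into a ``points off $C$'' part and a ``points on $C$'' part:
\[
\sum_{n\geq 0}\chi(Q^n_C)q^n=\Bigl(\sum_{k\geq 0}\chi(\Hilb^k(Y\setminus C))q^k\Bigr)\cdot\Bigl(\sum_{j\geq 0}\chi(W^j_C)q^j\Bigr).
\]
The first factor is handled by Cheah's formula for $\Hilb^n$ of a smooth threefold applied to $Y\setminus C$, giving $M(q)^{\chi(Y)-\chi(C)}=M(q)^{\chi(Y)-(2-2g)}$. For the second factor I would use the identity \eqref{lem:chiwalpha}, namely $\chi(W^\alpha_C)=|G_\alpha|^{-1}\chi(C^{r_\alpha}\setminus\Delta)\prod_i\chi(F_{\alpha_i})$, together with the well-known configuration-space formula $\chi(C^r\setminus\Delta)=\chi(C)(\chi(C)-1)\cdots(\chi(C)-r+1)$. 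Grouping partitions by multiplicities and recognizing the resulting sum as the binomial series $\sum_r\binom{\chi(C)}{r}t^r=(1+t)^{\chi(C)}$ with $t=\sum_{k\geq 1}\chi(F_k)q^k$ yields
\[
\sum_{j\geq 0}\chi(W^j_C)q^j=\Bigl(\sum_{k\geq 0}\chi(F_k)q^k\Bigr)^{\chi(C)}.
\]

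To identify $\sum_k\chi(F_k)q^k$ I would specialize the argument to the local model $C=L\subset Y=\mathbb A^3$, where $\chi(L)=1$ and $\chi(\mathbb A^3\setminus L)=0$, so that the general formula collapses to $\sum_n\chi(M_n)q^n=\sum_k\chi(F_k)q^k$; by \eqref{localeuler} this equals $M(q)/(1-q)$. Combining everything gives
\[
\sum_{n\geq 0}\chi(Q^n_C)q^n=M(q)^{\chi(Y)-(2-2g)}\cdot\Bigl(\frac{M(q)}{1-q}\Bigr)^{2-2g}=\frac{M(q)^{\chi(Y)}}{(1-q)^{2-2g}}.
\]
Substituting $q\mapsto -q$ and applying the sign identity above yields exactly \eqref{dfghj}. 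The main conceptual obstacle is the power-structure manipulation that produces the exponent $\chi(C)$: one must verify carefully that the partition sum factors as claimed, and that the configuration-space Euler characteristic formula applies here (which it does for any smooth curve, since Euler characteristics are motivic and the diagonal stratification is locally closed). Once that algebraic identity is in hand, the rest is bookkeeping.
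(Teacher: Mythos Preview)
Your proposal is correct and follows essentially the same route as the paper's proof: reduce to unweighted Euler characteristics via Theorem~\ref{thm:bfcomp}, factor $\sum_n\chi(Q^n_C)q^n$ through the stratification \eqref{qwert} into Cheah's formula for $Y\setminus C$ times the power $(\sum_k\chi(F_k)q^k)^{\chi(C)}$, identify the latter series with $M(q)/(1-q)$, and change $q\mapsto -q$. The only cosmetic difference is that the paper invokes $\chi(F_n)=\chi(M_n)$ directly (both count the same torus-fixed points), whereas you recover the same equality by specializing the general formula to $L\subset\mathbb A^3$; and you spell out the power-structure identity via the binomial series, which the paper leaves implicit in its citation of~\eqref{euler}.
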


\begin{proof}
For any smooth threefold $X$ we have Cheah's formula~\cite{Cheah}
\[
\sum_{n\geq 0}\chi(\Hilb^n X)q^n=M(q)^{\chi(X)}.
\]
We use this with $X=Y_0=Y\setminus C$, together with formula~\eqref{euler}, to compute
\begin{align*}
\sum_{n\geq 0}\chi(I_n(Y,C))q^n&=M(q)^{\chi(Y\setminus C)}\cdot \Bigl(\sum_{n\geq 0}\chi(F_n)q^n\Bigr)^{\chi(C)} \\
&=M(q)^{\chi(Y\setminus C)}\cdot \Bigl(\sum_{n\geq 0}\chi(M_n)q^n\Bigr)^{\chi(C)} \\
&=M(q)^{\chi(Y\setminus C)}\cdot \Bigl(\frac{M(q)}{1-q}\Bigr)^{\chi(C)}\qquad\textrm{by \eqref{localeuler}}\\
&=M(q)^{\chi(Y)}(1-q)^{2g-2}.
\end{align*}
The claimed formula follows by Theorem \ref{thm:bfcomp}.
\end{proof}

\begin{rem}
Formula \eqref{dfghj} can be rewritten as
\be\label{alskdjqpwoei}
\sum_{n\geq 0}\tilde\chi(I_n(Y,C))q^n=M(-q)^{\chi(Y)}\sum_{n\geq 0}\tilde\chi(P_n(Y,C))q^n.
\ee
Indeed $P_n(Y,C)=\Sym^nC$ is smooth of dimension $n$, thus $\tilde\chi=(-1)^{n}\chi$. 
The latter identity can be seen as the $\nu$-weighted version of the ``local'' wall-crossing formula
between ideals and stable pairs, which was already established for a single Cohen-Macaulay
curve at the level of Euler characteristics~\cite[Thm.~1.5]{ST}. In other words, \eqref{alskdjqpwoei}
is precisely what happens to the Stoppa-Thomas identity
\[
\sum_{n\geq 0}\chi(I_n(Y,C))q^{n}=M(q)^{\chi(Y)}\sum_{n\geq 0}\chi(P_n(Y,C))q^{n}
\]
when we replace $q$ by $-q$.
\end{rem}

\subsection{DT/PT wall-crossing at a single curve}
Let $C$ be a smooth curve of genus $g$, embedded 
in class $\beta$ in a smooth projective Calabi-Yau threefold $Y$. Let us define the generating series
\begin{align*}
\mathsf{DT}_C(q)&=\sum_{n\geq 0}\DT_{n,C}q^{n}\\
\mathsf{PT}_C(q)&=\sum_{n\geq 0}\PT_{n,C}q^{n}
\end{align*}
encoding the local contributions defined in~\eqref{contrib}. The stable pair side 
has already been computed~\cite[Lemma~3.4]{BPS}. The result is 
\be\label{bpscorrection}
\mathsf{PT}_C(q)=n_{g,C}\cdot (1+q)^{2g-2},
\ee
where $n_{g,C}$ is the $g$-th BPS number of $C$. For instance,
if $C$ is rigid, then $n_{g,C}=1$ and thanks to Theorem \ref{rigidthm} we see that \eqref{dfghj} can be rewritten as
\[
\mathsf{DT}_C(q)=M(-q)^{\chi(Y)}\cdot \mathsf{PT}_C(q).
\]
This formula can be seen as a ``local DT/PT correspondence'', or local wall-crossing formula at $C$.
We next prove that such formula, for arbitrary $C$, is equivalent to the following conjecture.

\begin{conj}\label{conj1}
Let $C$ be a smooth curve in a projective Calabi-Yau threefold $Y$. 
Let $\mathcal I=I_{1-g}(Y,\beta)$ be the Hilbert scheme where the ideal sheaf of $C$ lives as a point. Then, for all $n$, one has
\[
\DT_{n,C}=\nu_{\mathcal I}(\mathscr I_C)\cdot \tilde\chi(I_n(Y,C)).
\]
\end{conj}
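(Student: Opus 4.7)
The plan is to reduce the conjecture, via the stratification introduced in Section~\ref{DTinvar}, to a local factorization of the ambient Behrend function $\nu_I$ along the subscheme $I_n(Y,C)\cong Q^n_C$. First, by the closed immersion $\iota\colon Q^n_C\hookrightarrow I$ of Lemma~\ref{closedIMM} and Theorem~\ref{thm:bfcomp}, one identifies $I_n(Y,C)$ with $Q^n_C$ and obtains $\tilde\chi(I_n(Y,C))=(-1)^n\chi(I_n(Y,C))$. The conjecture is then equivalent to the identity
\[
\chi(I_n(Y,C),\nu_I) \;=\; (-1)^n\,\nu_{\mathcal I}(\mathscr I_C)\cdot \chi(I_n(Y,C)),
\]
i.e.\ to a statement purely about the restriction of $\nu_I$ to $Q^n_C$.

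The key step I would pursue is the following pointwise factorization: for each $\mathscr I_Z\in I_n(Y,C)$ corresponding to adding length-$n$ information to $C$ at closed points $p_1,\dots,p_r\in Y$, prove
\[
\nu_I(\mathscr I_Z) \;=\; \nu_{\mathcal I}(\mathscr I_C)\cdot \nu_{\mathrm{pts}}(\mathscr I_Z),
\]
where $\nu_{\mathrm{pts}}$ depends only on an \'etale neighborhood of $\{p_1,\dots,p_r\}$ inside $Y$. Concretely, I would try to build an \'etale neighborhood of $\mathscr I_Z$ in $I$ which maps \'etale onto a product of an \'etale neighborhood of $\mathscr I_C$ in $\mathcal I$ with a suitable product of local Quot schemes at the $p_i$; property (B2) of \S\ref{behfun} would then yield the claimed factorization, identifying $\nu_{\mathrm{pts}}$ with the Behrend functions of the local Quot schemes used in Section~\ref{sec:quot} and in the proof of Theorem~\ref{thm:bfcomp}.

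Granted this factorization, the conjecture would follow by integrating over the stratification~\eqref{qwert}: the scalar $\nu_{\mathcal I}(\mathscr I_C)$ pulls out as a global constant, and the remaining integral of $\nu_{\mathrm{pts}}$ on each stratum $\Hilb^{n-j}(Y\setminus C)\times W^\alpha_C$ is precisely the quantity computed, via \eqref{punctual} and \eqref{localnu}, in the proof of Theorem~\ref{thm:bfcomp}. Summing over strata as in \eqref{euler} produces the sign $(-1)^n$ together with $\chi(I_n(Y,C))$, completing the argument.

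The main obstacle is the local factorization in the middle step. The ambient Behrend function $\nu_I$ encodes the full deformation-obstruction theory of ideal sheaves of $Z$ in $Y$, and the factorization amounts to a direct-sum splitting of the governing $\Ext^\bullet$ complexes at $\mathscr I_Z$ into a ``curve part'' and a ``points part''. This is believable for generic $Z$ with all extra points away from $C$, but is subtle on the strata $W^\alpha_C$ with $\alpha\neq\emptyset$, where $\mathscr I_Z$ ties $C$ and the embedded points together nontrivially. A natural line of attack is a Joyce-Song-type identity for $\nu$ applied to $0\to\mathscr I_Z\to\mathscr I_C\to F\to 0$, but upgrading such an identity of numerical invariants to the genuine \'etale-local product structure of $I$ near $\mathscr I_Z$ required above is exactly where the conjecture's difficulty resides.
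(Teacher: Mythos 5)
Be aware that the statement you were given is labeled a \emph{conjecture} in the paper, and the paper does not prove it: it establishes it only for rigid $C$ (Theorem~\ref{rigidthm}), proves it equivalent to the local DT/PT wall-crossing formula (Theorem~\ref{thm:wc}), and ends with a heuristic sketch of why it should hold in general. Your proposal is essentially a reconstruction of that heuristic. The paper's closing paragraph proposes exactly the product decomposition you describe, phrased in terms of formal neighborhoods: if a formal neighborhood of $\mathscr I_Z$ in $I$ splits as the product of a formal neighborhood of $\mathscr I_C$ in $\mathcal I$ with a formal neighborhood of $\mathscr I_Z$ in $I_n(Y,C)$, then by the formal-local nature of the Behrend function one gets
\[
\nu_I\big{|}_{I_n(Y,C)}=\nu_{\mathcal I}(\mathscr I_C)\cdot\nu_{I_n(Y,C)},
\]
and integrating this identity together with Theorem~\ref{thm:bfcomp} gives the conjecture. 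You correctly flag the factorization as the key unproved step; the paper makes the same observation, adding that the decomposition fails once the underlying Cohen-Macaulay curve acquires a singularity and that no counterexample is known for smooth $C$. So there is no gap in your reasoning relative to what the paper actually achieves --- you have simply re-identified the same open problem.

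One refinement worth making: your ``points part'' $\nu_{\mathrm{pts}}$ should really be the Behrend function $\nu_{I_n(Y,C)}$ of $I_n(Y,C)\cong Q^n_C$ itself, not a quantity built only from \'etale neighborhoods of the added points inside $Y$. It is precisely $\chi(Q^n_C,\nu_{Q^n_C})=(-1)^n\chi(Q^n_C)$ from Theorem~\ref{thm:bfcomp} that supplies the factor $(-1)^n$ after integration, and on the strata $W^\alpha_C$ the local Quot schemes genuinely involve $C$ (via the punctual loci $F_{\alpha_i}$ lying over $C$), so the relevant intrinsic object is $Q^n_C$ rather than a product of Hilbert schemes of points of $Y$ near the $p_i$.
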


\begin{rem}
An equivalent formula has been conjectured by Bryan and Kool in their recent paper~\cite{BrKo}.
See Conjecture 18 in \emph{loc.~cit.}~for the precise (more general) setting.
\end{rem}

\begin{teo}\label{thm:wc}
Let $Y$ be a projective Calabi-Yau threefold, $C\subset Y$ a smooth curve. Then Conjecture \ref{conj1} is equivalent to the wall-crossing identity
\[
\mathsf{DT}_C(q)=M(-q)^{\chi(Y)}\cdot \mathsf{PT}_C(q).
\]
\end{teo}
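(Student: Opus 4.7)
The proof is a direct comparison of generating series, using the closed-form expressions already available for both sides. I would first combine Proposition \ref{corgenseries} with the BPS computation \eqref{bpscorrection} to rewrite
\[
M(-q)^{\chi(Y)}\mathsf{PT}_C(q)=n_{g,C}\cdot M(-q)^{\chi(Y)}(1+q)^{2g-2}=n_{g,C}\sum_{n\geq 0}\tilde\chi(I_n(Y,C))q^n.
\]
Therefore the wall-crossing identity $\mathsf{DT}_C(q)=M(-q)^{\chi(Y)}\mathsf{PT}_C(q)$ is equivalent, coefficient by coefficient in $q$, to the family of identities
\[
\DT_{n,C}=n_{g,C}\cdot \tilde\chi(I_n(Y,C)),\qquad n\geq 0.
\]

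The key remaining step is to identify the scalar $n_{g,C}$ with $\nu_{\mathcal I}(\mathscr I_C)$. Since $Q^0_C=\Quot_0(\mathscr I_C)$ is a single reduced point (the zero quotient), its scheme-theoretic image $I_0(Y,C)\subset \mathcal I$ is the reduced closed point $\{\mathscr I_C\}$. This has two immediate consequences: $\tilde\chi(I_0(Y,C))=1$ (which is consistent with the constant term of the series in Proposition \ref{corgenseries}), and
\[
\DT_{0,C}=\chi(I_0(Y,C),\nu_{\mathcal I})=\nu_{\mathcal I}(\mathscr I_C).
\]
Hence the $n=0$ instance of the family above is precisely the equality $n_{g,C}=\nu_{\mathcal I}(\mathscr I_C)$. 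Substituting this identification back into the family rewrites the wall-crossing exactly as the statement of Conjecture \ref{conj1}, and running the substitution in the opposite direction recovers the wall-crossing from the Conjecture.

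I do not anticipate any serious obstacle: the whole argument reduces to Proposition \ref{corgenseries}, the BPS formula \eqref{bpscorrection}, and the elementary inspection of the $n=0$ stratum. The only delicate point is the bookkeeping at $n=0$, which simultaneously pins down the constant $n_{g,C}$ as $\nu_{\mathcal I}(\mathscr I_C)$ via the automatic identity $\DT_{0,C}=\nu_{\mathcal I}(\mathscr I_C)$, and thereby makes the two formulations of the equivalence coincide.
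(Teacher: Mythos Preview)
Your reduction to the family of identities $\DT_{n,C}=n_{g,C}\cdot\tilde\chi(I_n(Y,C))$ is correct, and the observation that $\DT_{0,C}=\nu_{\mathcal I}(\mathscr I_C)$ and $\tilde\chi(I_0(Y,C))=1$ is fine. The gap is in the logical role you assign to the $n=0$ case. What you have actually shown is that the $n=0$ instance of the family \emph{equivalent to wall-crossing} reads $\nu_{\mathcal I}(\mathscr I_C)=n_{g,C}$. This lets you pass from wall-crossing to Conjecture~\ref{conj1}: assuming wall-crossing, the $n=0$ identity holds, so you may replace $n_{g,C}$ by $\nu_{\mathcal I}(\mathscr I_C)$ throughout. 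But the reverse implication breaks down. The $n=0$ instance of Conjecture~\ref{conj1} is the tautology $\nu_{\mathcal I}(\mathscr I_C)=\nu_{\mathcal I}(\mathscr I_C)\cdot 1$, which carries no information; in particular it does \emph{not} give you $\nu_{\mathcal I}(\mathscr I_C)=n_{g,C}$. So from Conjecture~\ref{conj1} alone you only obtain $\mathsf{DT}_C(q)=\nu_{\mathcal I}(\mathscr I_C)\cdot M(-q)^{\chi(Y)}(1+q)^{2g-2}$, and you cannot match this with $M(-q)^{\chi(Y)}\cdot\mathsf{PT}_C(q)=n_{g,C}\cdot M(-q)^{\chi(Y)}(1+q)^{2g-2}$ without an independent argument that the two constants agree.

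The paper closes this gap by proving $\nu_{\mathcal I}(\mathscr I_C)=n_{g,C}$ \emph{unconditionally}, independently of both the Conjecture and the wall-crossing identity: the Hilbert scheme $\mathcal I$ and the stable pair space $P_{1-g}(Y,\beta)$ are isomorphic along their open loci of pure curves, so $\nu_{\mathcal I}(\mathscr I_C)=\nu_{P_{1-g}(Y,\beta)}([\mathscr O_Y\surj\mathscr O_C])$, and the latter equals $(-1)^g\nu_{\mathcal M}(\mathscr O_C)=n_{g,C}$ by the results of \cite{BPS}. Once this identity is in hand, the equivalence is immediate in both directions, exactly as in your first paragraph.
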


\begin{proof}
Combining~\eqref{bpscorrection} with~\eqref{dfghj}, we see that the right hand side of the formula equals
\[
n_{g,C}\cdot \sum_{n\geq 0}\tilde\chi(I_n(Y,C))q^{n}.
\] 
Therefore the DT/PT correspondence holds at $C$ if and only if
\[
\DT_{n,C}=n_{g,C}\cdot\tilde\chi(I_n(Y,C)).
\]
We are then left with proving that $\nu_{\mathcal I}(\mathscr I_C)=n_{g,C}$. 
Recall that the moduli space of ideal sheaves is isomorphic to the moduli space
of stable pairs along the \emph{open} subschemes parametrizing \emph{pure} curves.
Moreover, the map $\phi:P_{1-g}(Y,\beta)\ra \mathcal M$ to the moduli space of stable pure 
sheaves considered in~\cite{BPS}, defined by forgetting the section of a stable pair, satisfies the relation 
\[
\nu_{P_{1-g}(Y,\beta)}=(-1)^g\phi^\ast\nu_{\mathcal M}
\]
by~\cite[Thm.~4]{BPS}. Hence
\begin{align*}
\nu_{\mathcal I}(\mathscr I_C)&=\nu_{\mathcal I^\pur}(\mathscr I_C)\\
&=\nu_{P_{1-g}(Y,\beta)}([\mathscr O_Y\surj \mathscr O_C])\\
&=(-1)^g\nu_{\mathcal M}(\mathscr O_C)\\
&=n_{g,C}
\end{align*}
where the last equality is~\cite[Prop.~3.6]{BPS}.
\end{proof}

\begin{rem}
Thanks to the identity $\nu_{\mathcal I}(\mathscr I_C)=n_{g,C}$, proved in the course of Theorem~\ref{thm:wc}, 
Conjecture~\ref{conj1} can be rephrased as
\[
\DT_{n,C}=\nu_{P}|_{P_n(Y,C)} \cdot \chi(I_n(Y,C)),
\] 
where $\nu_{P}|_{P_n(Y,C)}$ is the constant $(-1)^n\cdot n_{g,C}=(-1)^{n-g}\nu_{\mathcal M}(\mathscr O_C)$. In particular the conjecture says that the DT and PT contributions of $C$
differ from the Euler characteristic of the corresponding moduli space by the \emph{same} constant.
\end{rem}

We end the paper with some speculations, indicating plausibility reasons 
why Conjecture \ref{conj1} should hold true.

Suppose we were able to show that, given a point $\mathscr I_Z\in I_n(Y,C)\subset I$, a formal neighborhood 
of $\mathscr I_Z$ in $I$ is isomorphic to a product
\[
U\times V,
\]
where $U$ is a formal neighborhood of $\mathscr I_C$ in $\mathcal I$ and $V$ is a formal neighboorhood of $\mathscr I_Z$ in $I_n(Y,C)$.
Then, since the Behrend function value $\nu(P)$ only depends on a formal neighborhood of $P$~\cite{Jiang}, this would immediately lead to the Behrend
function identity
\be\label{stronger}
\nu_I|_{I_n(Y,C)}=\nu_{\mathcal I}(\mathscr I_C)\cdot \nu_{I_n(Y,C)},
\ee
from which Conjecture~\ref{conj1} follows after integration.
One reason to believe in a product decomposition as above is the following. At least when the maximal purely $1$-dimensional part $C\subset Z$
is smooth, one may expect to be able to ``separate'' infinitesimal deformations of $C$ (the factor $U$) from those deformations of $Z$ that keep $C$ fixed 
(the factor $V$ in the Quot scheme). This decomposition is manifestly false when $C$ 
acquires a singularity, and we do not know of any counterexample in the smooth case.

\bigskip

{\noindent{\bf Acknowledgements.}
First, I wish to thank my advisors Martin G.~Gulbrandsen and Lars H.~Halle for their help and the many insightful conversations around the topics
discussed here. I also thank Jim Bryan, Letterio Gatto, Martijn Kool, Jacopo Stoppa and Richard Thomas for valuable comments and for sharing their ideas on the subject.
}

\clearpage
\bibliographystyle{amsalpha}
\bibliography{bib}

\providecommand{\bysame}{\leavevmode\hbox to3em{\hrulefill}\thinspace}
\providecommand{\MR}{\relax\ifhmode\unskip\space\fi MR }
\providecommand{\MRhref}[2]{%
  \href{http://www.ams.org/mathscinet-getitem?mr=#1}{#2}
}
\providecommand{\href}[2]{#2}
\begin{thebibliography}{MNOP06}

\bibitem[BB07]{BB}
K.~Behrend and J.~Bryan, \emph{{\itshape{Super-rigid Donaldson-Thomas
  invariants}}}, Math. Res. Lett. \textbf{14} (2007), 559--571.

\bibitem[Beh09]{Beh}
K.~Behrend, \emph{{Donaldson-Thomas type invariants via microlocal geometry}},
  Ann. of Math. \textbf{2} (2009), no.~170, 1307--1338.

\bibitem[BF08]{BFHilb}
K.~Behrend and B.~Fantechi, \emph{{\itshape{Symmetric obstruction theories and
  Hilbert schemes of points on threefolds}}}, Algebra Number Theory \textbf{2}
  (2008), 313--345.

\bibitem[BK16]{BrKo}
J.~Bryan and M.~Kool, \emph{Donaldson-thomas invariants of local elliptic
  surfaces via the topological vertex}, preprint (2016).

\bibitem[Bri11]{Bri}
T.~Bridgeland, \emph{Hall algebras and curve counting invariants}, J. Amer.
  Math. Soc. \textbf{24} (2011), no.~4, 969--998.

\bibitem[Che96]{Cheah}
J.~Cheah, \emph{On the cohomology of {H}ilbert schemes of points}, J. Algebraic
  Geom. \textbf{5} (1996), no.~3, 479--511.

\bibitem[GP99]{GP}
T.~Graber and R.~Pandharipande, \emph{Localization of virtual classes}, Invent.
  Math. \textbf{135} (1999), no.~2, 487--518.

\bibitem[Jia]{Jiang}
Y.~Jiang, \emph{Motivic {M}ilnor fiber of cyclic ${L}_\infty$-algebras}, to
  appear in Acta Math. Sin.

\bibitem[Li06]{JLI}
J.~Li, \emph{Zero dimensional {D}onaldson-{T}homas invariants of threefolds},
  Geom. Topol. \textbf{10} (2006), 2117--2171.

\bibitem[LP09]{LEPA}
M.~Levine and R.~Pandharipande, \emph{Algebraic cobordism revisited}, Invent.
  Math. \textbf{176} (2009), no.~1, 63--130.

\bibitem[MNOP06]{MNOPpaper}
D.~Maulik, N.~Nekrasov, A.~Okounkov, and R.~Pandharipande, \emph{{Gromov-Witten
  theory and Donaldson-Thomas theory I}}, Compos. Math. \textbf{142} (2006),
  1263--1285.

\bibitem[Nit05]{Nit}
N.~Nitsure, \emph{Construction of {H}ilbert and {Q}uot schemes}, Fundamental
  Algebraic Geometry, Math. Surveys Monogr., vol. 123, Amer. Math. Soc.,
  Providence, RI, 2005, pp.~105--137.

\bibitem[PT09]{PT}
R.~Pandharipande and R.~P. Thomas, \emph{Curve counting via stable pairs in the
  derived category}, Invent. Math. \textbf{178} (2009), no.~2, 407--447.

\bibitem[PT10]{BPS}
\bysame, \emph{Stable pairs and {BPS} invariants}, J. Amer. Math. Soc.
  \textbf{23} (2010), no.~1, 267--297.

\bibitem[ST11]{ST}
J.~Stoppa and R.~P. Thomas, \emph{Hilbert schemes and stable pairs:
  \uppercase{git} and derived category wall crossings}, Bulletin de la
  Soci\'et\'e Math\'ematique de France \textbf{139} (2011), no.~3, 297--339.

\bibitem[Tod10]{Toda1}
Y.~Toda, \emph{Curve counting theories via stable objects {I}. {DT}/{PT}
  correspondence}, J. Amer. Math. Soc. \textbf{23} (2010), no.~4, 1119--1157.

\end{thebibliography}

\end{document}